\renewcommand{\eprint}[1]{\href{https://arxiv.org/abs/#1}{arXiv:#1}}
\newcommand{\pageafter}[1]{#1~pp.}
\setlist[itemize]{leftmargin=*}
\setlist[enumerate]{leftmargin=*,label=(\arabic*),ref=(\arabic*)}
\newtheorem{thm}{Theorem}
\crefname{thm}{Theorem}{Theorems}
\newtheorem{cor}[thm]{Corollary}
\crefname{cor}{Corollary}{Corollaries}
\newtheorem{lem}[thm]{Lemma}
\crefname{lem}{Lemma}{Lemmas}
\newtheorem{prop}[thm]{Proposition}
\crefname{prop}{Proposition}{Propositions}
\crefname{conj}{Conjecture}{Conjectures}
\newtheorem{ques}[thm]{Question}
\crefname{ques}{Question}{Questions}
\theoremstyle{definition}
\crefname{defn}{Definition}{Definitions}
\newtheorem{rem}[thm]{Remark}
\crefname{rem}{Remark}{Remarks}
\crefname{ex}{Example}{Examples}
\crefname{obs}{Observation}{Observations}
\crefname{claim}{Claim}{Claims}
\crefname{ass}{Assumption}{Assumptions}
\numberwithin{thm}{section}
\newcommand{\cB}{\ensuremath{\mathcal B}}
\newcommand{\cU}{\ensuremath{\mathcal U}}
\newcommand{\cX}{\ensuremath{\mathcal X}}
\newcommand{\bbE}{{\ensuremath{\mathbb E}} }
\newcommand{\bbN}{{\ensuremath{\mathbb N}} }
\newcommand{\bbP}{{\ensuremath{\mathbb P}} }
\newcommand{\bbR}{{\ensuremath{\mathbb R}} }
\newcommand{\bbZ}{{\ensuremath{\mathbb Z}} }
\let\oldd\d
\renewcommand{\d}{{\ensuremath{\delta}}}%normally underdot
\newcommand{\e}{{\ensuremath{\varepsilon}}}
\newcommand{\h}{{\ensuremath{\eta}}}
\let\oldk\k
\renewcommand{\k}{{\ensuremath{\kappa}}}%normally ogonek accent
\let\oldl\l
\renewcommand{\l}{{\ensuremath{\lambda}}}%normally polish l
\let\oldL\L
\renewcommand{\L}{{\ensuremath{\Lambda}}}%normally polish L
\newcommand{\m}{{\ensuremath{\mu}}}
\newcommand{\n}{{\ensuremath{\nu}}}
\let\oldo\o
\renewcommand{\o}{{\ensuremath{\omega}}}%normally slashed o
\let\oldO\O
\renewcommand{\O}{{\ensuremath{\Omega}}}%normally slashed O
\let\oldr\r
\renewcommand{\r}{{\ensuremath{\rho}}}%normally ring
\let\oldt\t
\renewcommand{\t}{{\ensuremath{\tau}}}%normally tie over 2 letters
\let\oldu\u
\renewcommand{\u}{{\ensuremath{\upsilon}}}%normally breve accent
\newcommand{\x}{{\ensuremath{\xi}}}
\renewcommand{\>}{\rangle}
\newcommand{\supp}{\operatorname{supp}}
\newcommand{\Int}{\operatorname{Int}}
\newcommand{\var}{\operatorname{Var}}
\newcommand{\1}{{\ensuremath{\mathbbm{1}}} }
\newcommand{\sD}{\ensuremath{\mathscr D} }
\newcommand{\sU}{\ensuremath{\mathscr U} }
\newcommand{\fD}{\ensuremath{\mathfrak D} }
\newcommand{\bzero}{\ensuremath{\mathbf 0} }
\newcommand{\bone}{\ensuremath{\mathbf 1} }
\newcommand{\pc}{\ensuremath{p_{\mathrm{c}}} }
\newcommand{\qc}{\ensuremath{q_{\mathrm{c}}} }
\newcommand{\tu}{\ensuremath{\widetilde\u} }
\renewcommand{\th}{\ensuremath{\widetilde{\h}}}
\DeclareMathAccent{\wtilde}{\mathord}{largesymbols}{"65}
\renewcommand{\leq}{\leqslant}
\renewcommand{\le}{\leqslant}
\renewcommand{\ge}{\geqslant}
\renewcommand{\to}{\rightarrow}
\begin{document}
\title{Bootstrap percolation, probabilistic cellular automata and sharpness}
\author{Ivailo Hartarsky\thanks{\textsf{hartarsky@ceremade.dauphine.fr}}}
\affil{CEREMADE, CNRS, Universit\'e Paris-Dauphine, PSL University\protect\\75016 Paris, France}
\date{\vspace{-0.25cm}\today}
\maketitle
\vspace{-0.75cm}
\begin{abstract}
We establish new connections between percolation, bootstrap percolation, probabilistic cellular automata and deterministic ones. Surprisingly, by juggling with these in various directions, we effortlessly obtain a number of new results in these fields. In particular, we prove the sharpness of the phase transition of attractive absorbing probabilistic cellular automata, a class of bootstrap percolation models and kinetically constrained models. We further show how to recover a classical result of Toom on the stability of cellular automata w.r.t.\ noise and, inversely, how to deduce new results in bootstrap percolation universality from his work.
\end{abstract}

\noindent\textbf{MSC2020:} Primary 60K35; Secondary 37B15, 60C05, 82B43, 82C20
\\
\textbf{Keywords:} probabilistic cellular automata, bootstrap percolation, kinetically constrained models, sharp phase transition, stability

\section{Introduction}
\label{sec:intro}
There are numerous links between probabilistic cellular automata (PCA) \cite{Louis18} and percolation (see \cref{subsec:models} for the definitions of our models of interest) \cite{Grimmett99}. In the case of additive PCA this link is very apparent, since they may be viewed as oriented percolation models (see e.g.\ \cite{Hartarsky21GOSP}). Moreover, percolation is often used as a reference model for comparison in more complex cases (see e.g.\ \cite{Marcovici19}). 

Our first goal will be to import a recent technique \cite{Duminil-Copin19} for proving the sharpness of phase transitions from percolation to the setting of attractive PCA. This allows us to establish that they all `die out' exponentially fast throughout their `subcritical' phase. This comes to complement a classical result of Bezuidenhout and Gray \cite{Bezuidenhout94} showing that a certain `supercritical' phase is also well-behaved.

Besides PCA, our other main motivation for pursuing this result comes from bootstrap percolation (BP). We establish a correspondence between the two, so as to deduce exponential decay of the probability of remaining healthy above criticality previously conjectured for a class of BP models. This also has implications for related kinetically constrained models (KCM), taking into account previous work of the author \cite{Hartarsky21}.

Finally, we show other uses of the correspondence between PCA and BP. Namely, it provides an equivalence between the non-triviality of the phase transition of certain BP models and the stability w.r.t.\ noise of certain deterministic cellular automata (CA). The former was studied recently in the framework of BP universality by  Bollob\'as, Smith and Uzzell \cite{Bollobas15}, Balister, Bollob\'as, Przykucki and Smith \cite{Balister16} and Balister, Bollob\'as, Morris and Smith \cite{Balister22}, while the latter was investigated over four decades ago by Toom \cite{Toom80} and subsequently by a number of authors \cites{Gacs21,Bramson91,Lebowitz90,Gray99,Berman88,Swart22}. Bridging their viewpoints yields results in both directions.

\subsection{Models}
\label{subsec:models}
\subsubsection{General setting}
\textbf{Convention} As it is common in set systems, we will denote points with lower case letters, sets of points with upper case ones (or with Greek lower case letters), families of such sets by capital calligraphic letters, systems of such families with capital script letters and in the rare case of classes of such systems, we will use capital fraktur letters. Here and below we use the words `set', `family', `system' and `class' as synonyms, but we will reserve their usage to the corresponding levels as much as possible.

Throughout, unless otherwise stated, we fix an integer dimension $d\ge 1$ and \emph{range} $r\in[1,\infty)$. We set $R=([-r,r]^{d}\times [-r,0))\cap\bbZ^{d+1}$, so as to allow non-zero memory. Models \emph{without memory} will be defined identically, taking $R=([-r,r]^d\times\{-1\})\cap\bbZ^{d+1}$. A \emph{configuration} is any element of $\O=\{0,1\}^{\bbZ^{d}\times([-r,0)\cap\bbZ)}$. For any set $X$, we identify any $\h\in\{0,1\}^X$ with a subset of $X$ in the natural way. An \emph{up-set} of a partially ordered set $(P,\ge)$ is a subset $U\subset P$ such that for any $(p,u)\in P\times U$ such that $p\ge u$ we have $p\in U$. We similarly define \emph{down-sets} (which are the complements of up-sets). Let $\sU$ denote the system of all up-families of $\O_R:=\{0,1\}^R$ equipped with the partial order $\h\ge\o$ if $\h_x\ge \o_x$ for all $x\in R$ (that is, if $\h\supset\o$). Note that $\varnothing\in\sU$ and $\O_R\in\sU$. We will further need to consider the class $\fD$ of down-sets of the partially ordered system $(\sU,\supset)$.

An \emph{attractive PCA}\footnote{More generally, PCA are defined identically by a rates measure supported not only on $\sU$, but on the entire power set of $\O_R$. However, attractiveness is essential for everything we will say, so we restrict directly to the relevant setting. See e.g.\ \cite{Salo21} for problems arising immediately without this assumption.} will be defined by the \emph{rates} $\u(\{\cU\})\in[0,1]$ for $\cU\in\sU$. We require $\sum_{\cU\in\sU}\u(\{\cU\})=1$ and view $\u$ as a probability measure on $\sU$. We say simply \emph{attractive CA}, if $\u$ is a Dirac measure. An attractive PCA is said to be \emph{additive} if $\u(\{\cU\})=0$ unless $\cU$ is generated by singletons, that is, there exists $X\subset R$ (possibly empty) such that $\cU=\{Y\subset R:X\cap Y\neq\varnothing\}$. We further say that it is \emph{absorbing} if $\u(\{\O_R\})=0$, which will be equivalent to saying that the $\bzero$ configuration (which we identified with the set $\varnothing$) is an absorbing state. 
Hence, attractive PCA identify with a finite dimensional simplex equipped with the standard topology (that is, the topology of weak convergence of the corresponding $\u$ measures) and similarly for additive, absorbing attractive and absorbing additive ones.

Given the rates $\u$, we can construct the associated finite memory Markov chain on $\{0,1\}^{\bbZ^d}$ graphically as follows. In words, at each time step $t\in\bbZ$, the state of each site $x\in\bbZ^{d}$ becomes $1$ if and only if the restriction of the current configuration to $R+(x,t)$ belongs to a randomly chosen up-family with law $\u$. More formally, let $(\cU_{x,t})_{(x,t)\in\bbZ^{d+1}}$ be an i.i.d.\ random field of up-families with law $\u$. Given the state of the PCA $\h$ at times $t-r,\dots,t-1$ (which form a configuration) and $x\in\bbZ^d$, we define 
\begin{equation}
\label{eq:def:PCA}
\h_{x}(t)=\begin{cases}1&\text{if }\left\{(y,s)\in R:\h_{x+y}(t+s)=1\right\}\in \cU_{x,t}\\
0&\text{otherwise}.\end{cases}
\end{equation}
This defines the trajectory of the PCA, given the initial state and the up-family field. When we want to specify that the initial state is $\o\in\O$, we write $\h^\o$ for the corresponding process. When the PCA has no memory (i.e.\ for all $\cU\in\supp\u$ and $U\in \cU$ we have $(U\cap[-r,r]^d\times \{-1\})\in\cU$), we may abusively write $\h^\o$ with $\o\in\{0,1\}^{\bbZ^d}$ and it is understood that this is the state of the process at time $-1$. We write $\bbP_\u$ for the law of the $\cU_{x,t}$ field, from which the process is constructed.

\subsubsection{Examples}
\label{subsec:examples}
Let us now introduce a few relevant examples.

\paragraph{Toom rule with death} The Toom rule \cite{Toom80}*{Example 1} is a deterministic CA in two dimensions, which updates the state of each site $x\in\bbZ^2$ to the more common value (in $\{0,1\}$) among the current state of $x$, $x+(1,0)$ and $x+(0,1)$. We further subject this CA to a specific type of noise, obtaining a PCA that we will refer to as \emph{Toom rule with death}. Namely, at each step and each site independently with probability $1-p\in[0,1]$, instead of applying the previous rule, we directly set it to state $0$. 

With our notation this corresponds to $d=2$, $r=1$ and $\u$ charging only two up-families: $\u(\{\varnothing\})=1-p$ (we will systematically put accolades to avoid confusing e.g.\ the singleton system consisting of the empty up-family appearing above and the empty system, which naturally verifies $\u(\varnothing)=0$) and
\[\u\left(\left\{\left\{X\subset R:|X\cap\{(0,0,-1),(1,0,-1),(0,1,-1)\}|\ge 2\right\}\right\}\right)=p.\]

This PCA is attractive, but not additive. For $p=1$ it degenerates into the (deterministic) CA called Toom rule. In fact, Toom \cite{Toom80} studied random perturbations of attractive CA in much greater generality, but we will come back to this later.

\paragraph{Generalised oriented site percolation} Fix $X\subset R$ and $p\in[0,1]$. We define GOSP to be the additive attractive PCA with neighbourhood $X$, given by $\u(\{\varnothing\})=1-p$ and $\u(\{\{Y\subset R:Y\cap X\neq\varnothing\}\})=p$. The name comes from the observation that $\h^{\{0\}}(t)\neq\bzero$ if and only if there is a path with steps in $-X$ from $0$ to some site of the form $(x,t)\in\bbZ^{d+1}$, using only sites $(y,s)\in\bbZ^{d+1}$ such that $\cU_{y,s}\neq \varnothing$. The standard oriented site percolation model is recovered by taking $X=\{(-1,-1),(1,-1)\}$ in one dimension.

\paragraph{Bootstrap percolation} A BP model is specified by an \emph{update family}: a finite family $\cX$ of finite subsets of $\bbZ^d\setminus\{0\}$, both the sets and the family being possibly empty. At each time step the state of a site $x\in\bbZ^d$ becomes $1$ if it is already $1$ or there exists $X\in\cX$ such that all elements of $x+X$ are in state $1$. In fact, this is just another way to parametrise the set of all attractive CA with no memory which are monotone in time in the sense that $\h_x(t+1)\ge \h_x(t)$ for all $x\in\bbZ^d$ and $t\ge 0$.\footnote{This property is sometimes called \emph{freezing} to distinguish from attractiveness, which is also a type of monotonicity.}

With our notation BP corresponds to taking as $\u$ the Dirac measure on the minimal up-family $\cU\subset \O_R$ such that $\{X\times\{-1\}:X\in(\cX\cup\{\{0\}\})\}\subset \cU$. If we drop the assumption that $\{0\}\times\{-1\}\in \cU$, we retrieve the class of all attractive CA with no memory. The latter is essentially the class of models whose random perturbations were studied by Toom \cite{Toom80}.

More generally, we define \emph{inhomogeneous BP} by a measure $\chi$ on update families. Then each site $x\in\bbZ^d$ is assigned an i.i.d.\ update family $\cX_x$ with law $\chi$ and at each step at site $x$ we use the minimal up-family $\cU_x\subset \O_R$ such that $\{X\times \{-1\}:X\in(\cX_x\cup\{\{0\}\})\}\subset\cU_x$ and define the evolution via \cref{eq:def:PCA}\footnote{There may be issues defining this if $\chi$ has infinite support. We will only consider finitely supported $\chi$ measures in this work.} with $\cU_{x,t}=\cU_x$ for all $t$. Clearly, this is no longer a CA or a PCA, but rather what one would call \emph{an inhomogeneous attractive CA}.

\paragraph{PCA with death}
Given an attractive PCA, we define its version with death by considering $\tu=p\u+(1-p)\d_{\varnothing}$, so that $\tu$ defines another attractive PCA. In words, we run the original PCA with probability $p$ and put state $0$ with probability $1-p$, like we did for the Toom rule with death and GOSP.

\paragraph{Kinetically constrained models} 
KCM are continuous time Markov processes with state space $\{0,1\}^{\bbZ^d}$ informally defined as follows, given an \emph{update family}: a finite family $\cX$ of finite subsets of $\bbZ^d\setminus\{0\}$, and a parameter $q\in[0,1]$ (see \cite{Cancrini08}). Each site $x\in\bbZ^d$ attempts to update at rate $1$ to an independent Bernoulli state with parameter $q$, but is only allowed to do so if the configuration at some $X\in\cX$ is $\bone$ (the all $1$ configuration). Otherwise, the state of $x$ cannot change until the above constraint becomes satisfied.

Superficially, KCM are not closely related to the attractive PCA that we study, as they are neither attractive, nor discrete time, nor synchronous. Nevertheless, we will see that our treatment entails new results for KCM as well.

\subsection{The phase diagram of PCA}
\label{subsec:phases}
In 1994 Bezuidenhout and Gray \cite{Bezuidenhout94} established the following fundamental result (see their work for a more formal statement).
\begin{thm}
\label{th:BG}
Within the set of attractive PCA $\u$ without memory, the set of those with $\u(\{\varnothing\})>0$ and $\bbP_\u\left(\forall t>0,\h^{\{0\}}(t)\neq\bzero\right)>0$ is open.
\end{thm}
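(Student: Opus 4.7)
The plan is to combine two observations. First, since $\u\mapsto \u(\{\varnothing\})$ is continuous for the weak topology on the finite-dimensional simplex of attractive PCA without memory, the condition $\u(\{\varnothing\})>0$ is manifestly open. The substance of the theorem therefore lies in the openness of the survival condition at a PCA $\u$ satisfying both $\u(\{\varnothing\})>0$ and $\bbP_{\u}(\forall t>0,\h^{\{0\}}(t)\neq\bzero)>0$.

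\textbf{Step 1: Finite-volume criterion for survival.} Starting from the hypothesis that survival has positive probability, I would reproduce in the discrete-time attractive PCA setting the restart argument of Bezuidenhout--Grimmett for the contact process. Conditioning on survival, the process $\h^{\{0\}}$ has at arbitrarily large times a live descendant whose position can be spatially localised by truncating the $\cU$-field outside a large finite space-time box. The outcome should be: for any $\e>0$ there exist $L=L(\e)$, $T=T(\e)$, and a cylindrical event $A_{L,T}$ measurable with respect to $(\cU_{x,t})$ restricted to a finite box $B_{L,T}\subset\bbZ^{d+1}$, such that (i)~on $A_{L,T}$, the process started from $\{0\}$ contains at some time in $[T,2T]$ a configuration dominating translates of $\{0\}$ inside several prescribed offspring regions shifted by order $L$ in each spatial direction and by order $T$ in time; and (ii)~$\bbP_{\u}(A_{L,T})>1-\e$.

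\textbf{Step 2: Continuity of cylindrical events.} Because $A_{L,T}$ depends on only finitely many i.i.d.\ marks with common law $\u$, the function $\u'\mapsto\bbP_{\u'}(A_{L,T})$ is a polynomial in the finitely many coordinates of $\u'$ that appear, hence continuous on the simplex. There is thus an open neighbourhood $V$ of $\u$ on which $\bbP_{\u'}(A_{L,T})>1-2\e$.

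\textbf{Step 3: Renormalisation.} For $\u'\in V$, tile $\bbZ^{d+1}$ by suitably shifted copies of $B_{L,T}$ arranged on a coarse oriented lattice and declare a block \emph{good} if the shifted event $A_{L,T}$ occurs. By attractiveness, if a good block already contains a live descendant of the origin, then each of its designated offspring blocks inherits a live descendant. The indicators of good blocks have uniformly bounded range of dependence and density exceeding $1-2\e$, so by the Liggett--Schonmann--Stacey domination theorem they stochastically dominate supercritical Bernoulli site percolation on the coarse lattice provided $\e$ was chosen small enough. The resulting infinite oriented cluster through the origin witnesses survival under $\bbP_{\u'}$, so $V$ lies in the set in question.

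The main obstacle is clearly Step 1: extracting a finite-volume witness of survival from the bare hypothesis that survival has positive probability. This uses attractiveness crucially (for coupling between initial conditions in the restart), together with the hypothesis $\u(\{\varnothing\})>0$, which plays the role of the spontaneous death rate of the contact process, providing the independent ``coin flips'' needed to cut off pathological ballistic behaviour and to make the restart lemma go through. Steps 2 and 3 are then routine applications of weak continuity of cylindrical probabilities and the standard renormalisation-to-oriented-percolation machinery.
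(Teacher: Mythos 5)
The paper does not prove \cref{th:BG} at all; it is quoted verbatim (in informal form) from Bezuidenhout and Gray~\cite{Bezuidenhout94}, and the reader is referred to that paper for a precise statement and proof. So there is no ``paper's proof'' here for you to match: your task was, in effect, to reconstruct Bezuidenhout--Gray.

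Your three-step outline is indeed a faithful high-level description of what that paper does (adapt the Bezuidenhout--Grimmett block construction for the contact process to discrete-time attractive systems, extract a finite-volume witness of survival, and renormalise to supercritical oriented percolation via continuity of finite-cylinder probabilities and Liggett--Schonmann--Stacey). Steps~2 and~3 are genuinely routine once Step~1 is in hand, and you correctly identify where the hypothesis $\u(\{\varnothing\})>0$ enters (as the analogue of the recovery rate, needed to kill off pathological spread so that the restart argument can localise). However, as a proof this is not complete: Step~1 is the entire mathematical content of the theorem, and you only assert that the restart/localisation argument ``should'' go through, without carrying out any of its several nontrivial stages (temporal growth from a single seed, spatial spread, production of well-separated translated seeds with controlled orientation, and the coupling/steering needed so that good blocks genuinely produce good offspring blocks). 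These steps occupy the bulk of \cite{Bezuidenhout94} and of Bezuidenhout--Grimmett before it, and are not a formality; in particular the passage from ``survival with positive probability'' to a \emph{cylindrical} event with probability close to~$1$ requires a careful iterated conditioning and truncation that you do not attempt. So the approach is the right one, but the proposal is a sketch of the known proof rather than a proof.
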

The main corollary of this is that the phase transition of survival from a single point for attractive PCA without memory but with positive death rate is continuous. Moreover, they showed that, within this `supercritical' phase described in the theorem, one can perform renormalisation to highly supercritical oriented percolation. This entails a number of results and is a key step towards establishing that models in this phase are `well-behaved' (see \cite{Hartarsky21GOSP} for more detail on what we mean by this).

Our first main result is of a similar flavor and complements \cref{th:BG}.
\begin{thm}
\label{th:main}
Let $S$ be in the set of attractive PCA such that $\d_\bzero$ is their unique invariant measure. Let $\Int$ and $\overline{\cdot}$ be the interior and the closure within the set of absorbing attractive PCA. Then for any $\u\in \Int(S)$ there exist $c,C>0$ such that for all $t>0$ and finite $A\subset \bbZ^d\times\{-r,\dots,-1\}$ it holds that
\begin{align}
\label{eq:main:1}\bbP_\u\left(\h_0^{\bone}(t)\neq 0\right)&{}\le Ce^{-ct},\\
\label{eq:main:2}\bbP_\u\left(\h^A(t)\neq\bzero\right)&{}\le C|A|e^{-ct}.
\end{align}
Moreover, $S\subset \overline{\Int (S)}$.
\end{thm}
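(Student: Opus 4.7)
The plan is to transport the OSSS-based sharpness methodology of Duminil-Copin, Raoufi and Tassion \cite{Duminil-Copin19} from Bernoulli percolation to attractive PCA, letting the time direction play the role of the radial parameter. The natural order parameter is $\theta_t(\u):=\bbP_\u(\h_0^{\bone}(t)\neq 0)$; attractiveness combined with $\bzero$ being absorbing shows that $t\mapsto\theta_t(\u)$ is non-increasing, and its limit $\theta_\infty(\u)$ equals the one-point marginal of the upper invariant measure. Consequently $\u\in S$ is equivalent to $\theta_t(\u)\to 0$ as $t\to\infty$.

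The heart of the argument is an OSSS-type inequality for the increasing Boolean function $\1\{\h_0^{\bone}(t)\neq 0\}$ of the i.i.d.\ field $(\cU_{x,s})$, with $\sU$ ordered by inclusion. I would construct a randomised exploration algorithm discovering this event by querying $\cU_{x,s}$ backwards from $(0,t)$, starting at a uniformly sampled time-depth $s^\star\in\{0,\dots,t\}$ in the spirit of the random-depth trick of \cite{Duminil-Copin19}; after averaging over $s^\star$, the per-coordinate revelation probabilities become of order $\Sigma_t(\u)/t$ with $\Sigma_t(\u):=\sum_{s=0}^{t}\theta_s(\u)$. Combined with a Russo-type formula for the directional derivative of $\theta_t$ across the finite-dimensional simplex of absorbing rates, OSSS would yield, along an affine path such as $\u_\l:=(1-\l)\d_\varnothing+\l\u$,
\[
\frac{\md}{\md\l}\theta_t(\u_\l)\ge\frac{c\,t\,\theta_t(\u_\l)(1-\theta_t(\u_\l))}{\Sigma_t(\u_\l)}.
\]
Integrating as in \cite{Duminil-Copin19} gives the sharpness dichotomy on the path: either $\theta_\infty(\u_\l)>0$, or a finite partial-sum certificate $\Sigma_{t_0}(\u_\l)\le M_0$ holds and yields $\theta_t(\u_\l)\le Ce^{-ct}$ uniformly in $t$.

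For $\u\in\Int(S)$, a whole neighbourhood of $\u$ in the absorbing simplex lies in $S$, so applying the differential inequality along a one-parameter path through $\u$ that remains in $S$ strictly past $\u$ (such a path exists by openness) places $\u$ strictly below the associated critical parameter, giving \eqref{eq:main:1}. For \eqref{eq:main:2}, attractiveness gives $\h^A(t)\le\h^{\bone}(t)$ and finite speed of propagation places $\supp\h^A(t)$ inside the forward $r$-cone of $A$, of cardinality $O(|A|t^d)$; a union bound with translation invariance absorbs the polynomial factor into the exponential.

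For $S\subset\overline{\Int(S)}$, given $\u\in S$ set $\u_\e:=(1-\e)\u+\e\d_\varnothing$. Monotonicity gives $\u_\e\in S$, and the differential inequality applied to the path $\l\mapsto(1-\l)\d_\varnothing+\l\u$, which lies entirely in $S$ since $\u\in S$, forces exponential decay $\theta_t(\u_\e)\le Ce^{-c(\e)t}$ with $c(\e)>0$. The corresponding finite-volume certificate $\Sigma_{t_0}(\u_\e)<M_0$ is robust under small perturbations because each $\theta_s$ is a polynomial function of $\u$ on the simplex; hence a neighbourhood of $\u_\e$ satisfies the same certificate and, by the dichotomy, lies in $S$. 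This places $\u_\e\in\Int(S)$, and letting $\e\downarrow 0$ concludes. The principal obstacle is the construction of the randomised algorithm: in the attractive but non-additive setting, the witness for $\{\h_0^{\bone}(t)\neq 0\}$ is a backward tree of dependencies determined by $\cU$ itself rather than a planar cluster, so the random-depth trick must exploit both monotonicity and translation invariance to obtain per-coordinate revelation probabilities of order $\Sigma_t/t$.
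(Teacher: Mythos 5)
Your plan for the core sharpness mechanism is essentially identical to the paper's: a Russo formula adapted to the up-family field (the paper's \cref{prop:russo}), an OSSS-type variance bound restricted to events rather than observables to avoid the lack of a total order on $\sU$ (\cref{lem:DCRT:var}), a randomised exploration algorithm with a uniformly sampled time-depth yielding revealment bounds of order $\Sigma_t/t$ (\cref{lem:algo}, which explores forward from a random time using absorbingness, roughly dual to your backward proposal), and integration of the resulting differential inequality along the affine path $p\mapsto p\u+(1-p)\d_\varnothing$ as in \cite{Duminil-Copin19}*{Lemma 3.1}. Your deduction of \cref{eq:main:1} for $\u\in\Int(S)$ and the finite-speed union bound giving \cref{eq:main:2} also match the paper.

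There is, however, a genuine gap in your argument for $S\subset\overline{\Int(S)}$, precisely at the point you pass from exponential decay of $\theta_t(\u_\e)$ to openness. You assert that a certificate of the form $\Sigma_{t_0}(\u_\e)<M_0$ is robust under perturbation (true, since $\theta_s$ is a polynomial in $\u$) and that, by the dichotomy, any $\n$ satisfying the certificate lies in $S$. The second step is not justified: the dichotomy of \cite{Duminil-Copin19}*{Lemma 3.1} is path-wise, and its critical value $\tilde p_c$ is defined by the asymptotic condition $\limsup_n\log\Sigma_n/\log n<1$, which a bound at one finite $t_0$ does not certify. Concretely, $\Sigma_{t_0}(\n)<M_0$ gives no control on $\Sigma_n(\n)$ for $n\gg t_0$ unless one first establishes exponential decay for $\n$, which is exactly what one wants to prove. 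The paper closes this gap with a separate renormalisation argument (\cref{prop:open}): the relevant open certificate is not a bound on a partial sum of one-point marginals, but smallness of the probability $\bbP_\u(\cB_{0,0})$ that a large box, started from $\bone$, is not entirely in state $0$ at its top; a Liggett--Schonmann--Stacey domination and a comparison with highly subcritical finite-range oriented percolation of bad boxes then yield exponential decay for every $\n$ in a neighbourhood. This block argument uses absorbingness in an essential way (good boxes remain $\bzero$ without further input) and is a genuinely extra ingredient that your proposal does not supply. You should either reproduce this renormalisation step or give a correct finite-volume certificate together with the iteration showing it propagates to exponential decay.

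Two smaller remarks. First, for the path through $\u\in\Int(S)$ to extend strictly past $\u$ within the simplex you need $\u(\{\varnothing\})>0$; this holds because $\u(\{\varnothing\})=0$ would make $\d_\bone$ invariant, contradicting $\u\in S$, and is worth stating. Second, in the non-additive setting a backward ``witness tree'' for $\{\h_0^{\bone}(t)\neq 0\}$ is really a full backward cone, since an up-family may require several neighbours simultaneously; the paper sidesteps this by exploring forward in time from the random depth and stopping once the restarted process reaches $\bzero$, which is the cleaner implementation of the same idea.
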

This result can be informally rephrased as `the subcritical phase of attractive PCA is well-behaved.' We leave the discussion of how to use this on concrete models to \cref{subsec:parametrisation}.

Unfortunately, the terms `supercritical' for \cref{th:BG} and `subcritical' for \cref{th:main} are quite misleading. Indeed, it is known that there is an intermediate regime, which may naturally be called \emph{cooperative survival} phase. More specifically, the Toom rule with death for small enough death rates exhibits such behaviour (see \cites{Toom80}) and more general examples will be discussed in \cref{subsec:BP}.

\subsection{Parametrised models and applications}
\label{subsec:parametrisation}
In practice one is usually not interested in the set of \emph{all} PCA, but rather has one specific PCA in mind, possibly with a parameter to tune. As it stands, \cref{th:main} does not assert that for a specific model, as we vary the parameter, we will have a well-behaved subcritical phase immediately followed by a non-uniqueness one. Our next goal is to provide a simple sufficient criterion for this to happen. Indeed, hypotheses are needed, since the model of interest may glide along the boundary of the subcritical phase, in which case no sharpness of the phase transition is to be expected. To preclude this scenario we will require a few more definitions.

We say that a measure $\n$ on a partially ordered set $(X,\ge)$ \emph{stochastically dominates} another one, $\u$, if for every down-set $D$ of $X$ we have $\n(D)\le\u(D)$. A \emph{parametrised model} is a continuously differentiable curve $(\u_p)_{p\in[p_1,p_2]}$ in the space of attractive PCA for some $p_1<p_2\in\bbR$, so that $\u_p$ are probability measures on $\sU$. Here and below derivatives $\u'_p=\lim_{q\to p}(u'_q-u'_p)/(q-p)$ are viewed as signed measures on $\sU$ equipped with the weak topology. Note that the function $p\mapsto \u_p$ is nondecreasing for stochastic domination (i.e.\ for all $p\le q$ the measure $\u_q$ stochastically dominates $\u_p$) iff 
\begin{equation}
\label{eq:def:monotonicity}\forall\sD\in\fD\setminus\{\varnothing,\sU\},\forall p\in[p_1,p_2],\u'_p(\sD)\le 0.
\end{equation}
If that is the case, we say that the parametrised model is \emph{nondecreasing} and, if additionally $\u_p\neq\u_q$ whenever $p\neq q$, we say it is \emph{increasing}. 

Morally, any increasing parametrised model should have a sharp phase transition in the sense of \cref{th:main} (see \cref{th:parametrised} below). However, even for percolation some issues may arise if the model increases in a `nonessential' way. Furthermore, there has been some trouble establishing a necessary and sufficient condition for a modification being `essential' \cites{Balister14,Aizenman91}. Circumventing these issues, we propose a sufficient criterion, which is satisfied in natural models.

We say the parametrised model is \emph{strongly increasing} if there exists $c>0$ such that
\begin{equation}
    \label{eq:def:strong:monotonicity}
\forall \sD\in \fD\setminus\{\varnothing,\sU\}, \forall p\in[p_1,p_2],\u_p'(\sD)\le -c(1-\u_p(\sD)),
\end{equation}
where the derivative is w.r.t.\ $p$. For a nondecreasing parametrised model we define the \emph{critical parameter} \begin{equation}
\label{eq:def:pc}
\pc=\inf\left\{p\in[p_1,p_2]: \d_\bzero\text{ is not the unique invariant measure}\right\}
\end{equation}
with $\inf\varnothing=p_2$. Note that $\u_p(\{\varnothing\})=0$ iff $\d_\bone$ is invariant, while $\u_p(\{\O_R\})=0$ iff $\d_\bzero$ is invariant. We will say that a nondecreasing parametrised model has a \emph{sharp transition} if for any $p\in[p_1,\pc)$ there exist $c,C>0$ such that \cref{eq:main:1,eq:main:2} hold for all $t>0$ and finite $A\subset\bbZ^{d}\times\{-r,\dots,-1\}$.
\begin{thm}
\label{th:parametrised}
Every strongly increasing parametrised model has a sharp transition.
\end{thm}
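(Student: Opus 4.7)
The plan is to reduce the result to \cref{th:main} via an attractive coupling argument: for each $p\in[p_1,\pc)$, I shall produce some $\nu\in\Int(S)$ with $\u_p\le_{\mathrm{st}}\nu$, and then \cref{th:main} applied to $\nu$ together with an attractive coupling will propagate the bounds \eqref{eq:main:1}--\eqref{eq:main:2} from $\nu$ to $\u_p$.

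First I record that strongly increasing implies nondecreasing: since $\u_p'(\sD)\le -c(1-\u_p(\sD))\le 0$ for every non-trivial $\sD$, the function $p\mapsto\u_p(\sD)$ is non-increasing on $[p_1,p_2]$ for every down-set $\sD$, which is exactly the statement $\u_p\le_{\mathrm{st}}\u_q$ whenever $p\le q$. By Strassen's theorem on the finite poset $(\sU,\supset)$ one may then couple the i.i.d.\ up-family fields of any two attractive PCA $\u\le_{\mathrm{st}}\nu$ so that $\cU^{\u}_{x,t}\subset\cU^{\nu}_{x,t}$ almost surely. An induction on time in \eqref{eq:def:PCA} using the up-family property of each $\cU^{\u}_{x,t}$ then shows that the trajectories for $\u$ from any common initial condition $\o$ are pointwise dominated by those for $\nu$, giving $\bbP_{\u}(\h_0^\bone(t)\neq 0)\le\bbP_{\nu}(\h_0^\bone(t)\neq 0)$ together with the corresponding inequality for finite-support initial data. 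This reduces the task to exhibiting $\nu\in\Int(S)$ with $\u_p\le_{\mathrm{st}}\nu$.

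To build such a $\nu$, pick $p'\in(p,\pc)$; by the definition of $\pc$, $\u_{p'}\in S$. Integrating the differential inequality $(1-\u_s(\sD))'\ge c(1-\u_s(\sD))$ from $s=p$ to $s=p'$ yields, for every non-trivial $\sD$,
\[
\u_p(\sD)-\u_{p'}(\sD)\ge(1-\u_p(\sD))\bigl(e^{c(p'-p)}-1\bigr).
\]
Because $\fD$ is finite, the quantity
\[
M:=\min\bigl\{\u_p(\sD)-\u_{p'}(\sD)\tc\sD\in\fD\setminus\{\varnothing,\sU\},\ \u_p(\sD)<1\bigr\}
\]
is a minimum over finitely many strictly positive numbers (should this collection be empty, $\u_p=\d_\varnothing$ and \eqref{eq:main:1}--\eqref{eq:main:2} hold trivially), so $M>0$. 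The density $S\subset\overline{\Int(S)}$ from \cref{th:main} supplies some $\nu\in\Int(S)$ within total variation distance less than $M$ of $\u_{p'}$. One then checks $\u_p\le_{\mathrm{st}}\nu$ down-set by down-set: the trivial $\sD\in\{\varnothing,\sU\}$ are automatic; for non-trivial $\sD$ with $\u_p(\sD)=1$ the bound $\nu(\sD)\le 1=\u_p(\sD)$ follows because $\nu$ is a probability measure; and for non-trivial $\sD$ with $\u_p(\sD)<1$ we have $\nu(\sD)\le\u_{p'}(\sD)+M\le\u_p(\sD)$ by the definition of $M$.

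The main pitfall to navigate is the possible existence of ``stuck'' non-trivial down-sets $\sD$ for which $\u_s(\sD)\equiv 1$ throughout $[p_1,p_2]$, which arises as soon as the whole curve lies in a proper face of the simplex of absorbing attractive PCA (namely, measures supported on a proper down-set of $\sU$). On such $\sD$ the Gronwall bound degenerates and the gap $\u_p(\sD)-\u_{p'}(\sD)$ vanishes, so a naive attempt to place a whole neighbourhood of $\u_p$ inside $S$ would fail. The resolution built into the plan above is to ask for a $\nu$ dominating $\u_p$ \emph{from above}, not for an arbitrary perturbation of $\u_p$ itself to lie in $S$: on any stuck $\sD$ the constraint $\nu(\sD)\le\u_p(\sD)=1$ is vacuous for a probability measure $\nu$, so only the finitely many active $\sD$ with $\u_p(\sD)<1$ need to be controlled, where the strict constant $c>0$ in \eqref{eq:def:strong:monotonicity} supplies the requisite uniform gap.
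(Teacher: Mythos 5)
Your proof is correct, and it takes a genuinely different reduction route than the paper's. Both arguments exploit the finiteness of $\fD$ together with the constant $c>0$ in \cref{eq:def:strong:monotonicity} to produce a uniform gap over the finitely many active down-systems $\sD$ (those with $\u_p(\sD)<1$), and both dispose of the stuck ones with $\u_p(\sD)=1$ trivially; that core step is shared. What differs is the reduction target. The paper's \cref{lem:strong:monotonicity} compares the linear death curve $\n_{p'}=p'\u_p+(1-p')\d_\varnothing$ near $p'=1$ with the shifted original $\u_{p-(1-p')/c'}$ and then applies \cref{th:linear} directly to the linear curve, never invoking \cref{th:main}. Your proof instead integrates \cref{eq:def:strong:monotonicity} (Gronwall) to quantify the gap between $\u_p$ and $\u_{p'}$ for a fixed $p'\in(p,\pc)$, then invokes the density clause $S\subset\overline{\Int(S)}$ of \cref{th:main} to find $\nu\in\Int(S)$ so close to $\u_{p'}$ that it still stochastically dominates $\u_p$, and finishes by a Strassen coupling. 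Both routes are sound, but yours is slightly heavier logically: the density clause of \cref{th:main} rests on the renormalisation argument of \cref{prop:open}, which the paper's direct route through \cref{th:linear} bypasses. On the other hand, your route makes the role of strong monotonicity---a uniform push away from the boundary of the uniqueness region---somewhat more transparent.
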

\begin{rem}
\Cref{th:main,th:parametrised} and their proofs extend naturally beyond the binary setting. More precisely, one may replace the base set $\{0,1\}$ of $\O$ by an arbitrary finite partially ordered set with unique minimal and maximal elements called $0$ and $1$ (the maximal one is not essential). We have chosen to reason directly in the binary case for the sake of readability.
\end{rem}

We next state a few interesting applications of \cref{th:parametrised} to specific models. The first one recovers a classical result of Aizenman and Barsky \cite{Aizenman87}*{Theorem 7.3} (also see their section 8.1 for a discussion of additive PCA) and Menshikov \cite{Menshikov86}, which also has other proofs \cites{Duminil-Copin16,Duminil-Copin19}.
\begin{cor}
\label{cor:GOSP}
Every GOSP has a sharp transition.
\end{cor}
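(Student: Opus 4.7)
The plan is to reduce the corollary to \cref{th:parametrised} by verifying that the natural parametrisation $p\mapsto \u_p$ of GOSP on $[0,1]$ is strongly increasing in the sense of \eqref{eq:def:strong:monotonicity}. Given a neighbourhood $X\subset R$ (the degenerate case $X=\varnothing$ yields the deterministic PCA that always prints $\bzero$ and requires no proof), the definition gives $\u_p=(1-p)\d_\varnothing+p\d_{\cU_X}$ with $\cU_X=\{Y\subset R:Y\cap X\neq\varnothing\}\in\sU$. Since $p\mapsto\u_p$ is affine, it is manifestly $C^1$, and the derivative, viewed as a signed measure on $\sU$, is the constant $\u'_p=\d_{\cU_X}-\d_\varnothing$.

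The core of the verification is a structural observation about the poset $(\sU,\supset)$: the empty up-family $\varnothing$ is its unique minimum, since $\varnothing\subset\cU$ for every $\cU\in\sU$. Therefore every nonempty down-set $\sD\in\fD$ contains $\varnothing$. Since $\u_p$ and $\u'_p$ are supported on $\{\varnothing,\cU_X\}$, the values $\u_p(\sD)$ and $\u'_p(\sD)$ depend only on whether $\cU_X\in\sD$. For $\sD\in\fD\setminus\{\varnothing,\sU\}$ there are then two cases: if $\cU_X\in\sD$ then $\u_p(\sD)=1$ and $\u'_p(\sD)=0$, so the required inequality becomes $0\le 0$; if $\cU_X\notin\sD$ then $\u_p(\sD)=1-p$ and $\u'_p(\sD)=-1$, so it reduces to $-1\le -cp$, which holds uniformly on $p\in[0,1]$ with $c=1$. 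Hence \eqref{eq:def:strong:monotonicity} is satisfied with $c=1$, and \cref{th:parametrised} delivers the sharp transition.

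All the genuine probabilistic and analytic work is absorbed into \cref{th:parametrised}, so the real obstacle here is conceptual rather than technical: one must unpack the ordering convention and remember that $\fD$ consists of the down-sets of $(\sU,\supset)$, in which the empty up-family sits at the bottom. Once this is internalised, the exclusion of $\sD=\varnothing$ from \eqref{eq:def:strong:monotonicity} exactly matches the only case one cannot control (where $\u_p(\sD)=0$ but $\u'_p(\sD)=0$ also), and the two-case check above is immediate.
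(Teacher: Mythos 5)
Your proposal is correct and follows essentially the paper's own route: you verify the strong monotonicity condition \eqref{eq:def:strong:monotonicity} with $c=1$ for the linear parametrisation $\u_p=(1-p)\d_\varnothing+p\d_{\cU_X}$ and then invoke \cref{th:parametrised}, exactly as the paper does. The only stylistic difference is that you split into the two cases $\cU_X\in\sD$ versus $\cU_X\notin\sD$ rather than compressing them into the identity $\u'_p(\sD)=(-1+\u_p(\sD))/p\le -1+\u_p(\sD)$, which conveniently also avoids the paper's side remark about $p=0$; the paper additionally notes one could instead apply \cref{th:linear} directly, a shortcut you do not mention but do not need.
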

Indeed, by the definition in \cref{subsec:examples} of GOSP with neighbourhood $X\subset R$, for all $p\in(0,1]$ (the case $p=0$ is similar) and $\sD\in\fD\setminus\{\varnothing\}$ we have
\[\u'(\sD)=\u'(\{\varnothing\})+\1_{\cU\in\sD}\u'(\{\cU\})=-1+\1_{\cU\in\sD}=\frac{-1+\u(\sD)}{p}\le-1+\u(\sD),\]
where we set $\cU=\{Y\subset R:Y\cap X\neq\varnothing\}$. However, we may also directly use \cref{th:linear} below instead of \cref{th:parametrised} to deduce \cref{cor:GOSP}.

We next state a consequence of \cref{th:parametrised} in the BP context.
\begin{thm}
\label{th:BP}
Consider a BP with update family $\cX$ contained in a half-space: there exists $u\in\bbR^d$ such that $\forall X\in\cX,\forall x\in X,\<x,u\>< 0$. We use i.i.d.\ Bernoulli initial condition with parameter $q$, whose law we denote by $\m_q$ (BP being a CA, this is the only randomness). Then, setting 
\begin{equation}
    \label{eq:def:qc}
\qc=\inf\left\{q\in[0,1]:\lim_{t\to\infty}\m_q(\h_0(t)\neq 1)=0\right\}.\end{equation}
we have
\begin{equation}
\label{eq:BP:exp:decay}
\forall q>\qc,\exists c,C>0,\forall t>0,\quad\m_q\left(\h_0(t)\neq 1\right)\le Ce^{-ct}.
\end{equation}
\end{thm}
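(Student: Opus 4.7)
The plan is to deduce \cref{th:BP} from \cref{th:parametrised} applied to a suitable PCA, followed by a short-proof observation converting PCA-time decay into BP-time decay.

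After a unimodular change of coordinates, I may assume $u=e_d$, so every $x\in X$ with $X\in\cX$ satisfies $x_d\le-1$. Enlarging $r$ if necessary, $\cX\subset R=([-r,r]^{d-1}\times[-r,0))\cap\bbZ^d$. Treating the $d$-th coordinate as PCA time, I view BP configurations as space-time configurations of a $(d-1)$-dimensional attractive PCA with memory $r$. Define the dual up-family $\cU^*=\{Y\subset R:\forall X\in\cX,\,X\cap Y\ne\varnothing\}\in\sU$ and the parametrised family $\tilde\u_p=(1-p)\d_{\{\varnothing\}}+p\d_{\{\cU^*\}}$, $p\in[0,1]$. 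These are absorbing attractive PCA, and since $\cU^*\ne\O_R$ while $\varnothing$ belongs to every non-empty down-set of $\sU$, a direct check gives strong monotonicity with constant $c=1$. \Cref{th:parametrised} then yields a sharp threshold $\pc$ with the bound $\bbP_{\tilde\u_p}(\h_0^\bone(t)\ne 0)\le Ce^{-ct}$ for $p<\pc$.

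Next I set up the correspondence and identify $\qc=1-\pc$. Letting $\omega\sim\m_q$ be the BP initial condition and $\h_\infty=\lim_{t\to\infty}\h(t)$ the BP final state, set $\zeta_y(\tau)=1-\h_\infty(y,\tau)$ for $(y,\tau)\in\bbZ^{d-1}\times\bbZ$. The BP fixed-point equation for $\h_\infty$ translates into $\zeta$ satisfying the PCA recursion driven by the i.i.d.\ field $\cU_{y,\tau}$ equal to $\varnothing$ if $\omega(y,\tau)=1$ and to $\cU^*$ otherwise, with law $\tilde\u_{1-q}$. A monotone coupling shows that $\zeta$ is the maximal solution of this fixed-point equation; starting the PCA from $\bone$ at PCA times $\{-M-r,\dots,-M-1\}$ and sending $M\to\infty$ identifies this maximal solution as distributed according to the maximal invariant measure $\bar\nu_{1-q}$ of $\tilde\u_{1-q}$. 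Consequently $\m_q(\h_\infty(0)=0)>0$ iff $\bar\nu_{1-q}\ne\d_\bzero$ iff $1-q>\pc$, yielding $\qc=1-\pc$.

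Fix now $q>\qc$, so $p=1-q<\pc$, and consider the auxiliary BP restricted to the half-space $\bbZ^{d-1}\times[-t,\infty)$ with Bernoulli$(q)$ initial condition. Applying the same correspondence and using translation invariance of $\omega$, the probability that the origin is not infected in this auxiliary BP (at BP time $\infty$) equals $\bbP_{\tilde\u_p}(\h_0^\bone(t)=1)\le Ce^{-ct}$. The decisive geometric observation is that any proof of infection of the origin using only sites with $y_d\ge-t$ has depth at most $t$, since each rule application descends $y_d$ by at least $1$ (as $x_d\le-1$ for every $x\in X\in\cX$). Such a proof is equally valid in the full BP on $\bbZ^d$, so $\{\h_0(t)=0\}$ is contained in the event that the origin is not infected in the auxiliary BP, yielding $\m_q(\h_0(t)\ne 1)\le Ce^{-ct}$.

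The main delicate step will be the identification $\zeta\sim\bar\nu_{1-q}$ used to obtain $\qc=1-\pc$; the rest is essentially bookkeeping on the correspondence between the BP and the PCA.
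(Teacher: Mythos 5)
Your proposal follows essentially the same route as the paper: pass to the $(d-1)$-dimensional absorbing attractive PCA with up-family $\cU^*$ (the same dual up-family the paper obtains by inverting \cref{eq:correspondence}), deduce exponential decay for $\th^{\bone}$ from the sharpness result (the paper uses \cref{th:linear} directly, which your linear parametrisation $\tilde\u_p=(1-p)\d_{\{\varnothing\}}+p\d_{\{\cU^*\}}$ satisfies without needing the detour through strong monotonicity and \cref{th:parametrised}), identify $\qc=1-\pc$ through the BP--PCA correspondence, and truncate the initial condition together with the depth-at-most-$t$/stationary-after-$t$-steps observation to transfer PCA-time decay to BP-time decay. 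The ``delicate step'' you flag---identifying $\zeta$ with the upper invariant measure of $\tilde\u_{1-q}$---is precisely what \cref{prop:correspondence} establishes, and your sketch of it (maximal solution of the fixed-point equation, monotone limit from $\bone$ at time $-M\to-\infty$) is the correct argument.
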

\Cref{th:BP} makes further progress towards proving \cite{Hartarsky21}*{Conjecture 8.1}, which asks for this result for all families, not necessarily contained in a half-space. This conjecture itself generalised a question of Schonmann \cite{Schonmann92} from 1992, which remains open, asking if the same result holds when restricted to $\cX$ contained in the set of nearest neighbours of the origin instead of a half-space.

It should be noted that \cref{th:BP} is not a direct application of \cref{th:parametrised}, since BP has $\u(\{\varnothing\})=0$ by definition. Instead we will rely on the correspondence between BP and CA of \cref{prop:correspondence}. This correspondence presented in \cref{sec:BP} is also at the root of the results discussed in \cref{subsec:BP}. To give a simple instance of it, let us focus on standard oriented site percolation from \cref{subsec:examples} viewed as a one-dimensional CA with death. Consider the two-diemsnional BP update family $\cX=\{\{(-1,-1),(1,-1)\}\}$ and the initial condition given by the space-time sites whose (random) up-set is $\varnothing$ (death). Then the space-time sites $(x,t)$ which remain in state 0 until time $t$ in the BP process are exactly the ones which are in state 1 for the oriented percolation CA with death with initial condition $\bone$. 

\cref{th:BP} generalises directly to inhomogeneous BP as follows.
\begin{thm}
\label{th:inhomogeneous}
Consider an inhomogeneous BP with measure $\chi$ supported on a finite set of update families contained in the same half-space (see \cref{th:BP}). Denote by $\m_q$ the law of the i.i.d.\ update families with distribution $\chi$ and i.i.d.\ initial condition with Bernoulli law of parameter $q$. Then \cref{eq:BP:exp:decay} holds with $\qc$ defined by \cref{eq:def:qc}.
\end{thm}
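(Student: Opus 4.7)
The plan is to extend the argument sketched for \cref{th:BP} (via \cref{prop:correspondence} and \cref{th:parametrised}) by folding the additional randomness from $\chi$ into the mixture of up-families used by the associated PCA. Fix a unit vector $u$ witnessing the common half-space assumption. Setting $p = 1 - q$ and invoking \cref{prop:correspondence} recasts the inhomogeneous BP as a $(d-1)$-dimensional attractive PCA with rate measure
\[
\u_p \;=\; (1-p)\,\d_{\varnothing}\; +\; p\,\tilde\chi,\qquad p \in [0,1],
\]
where $\tilde\chi$ is the push-forward of $\chi$ under the map $\cX \mapsto \cU_\cX$ sending each update family to its associated minimal up-family (as in the BP definition of \cref{subsec:examples}). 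The atom at $\varnothing$ encodes the BP sites with initial state $1$, while the remaining atoms (finitely many, since $\chi$ is finitely supported) encode the local update family. Because no $\cU_\cX$ contains the empty configuration, none of them equals $\O_R$, so $\u_p$ is absorbing.

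Next I would verify that $(\u_p)_{p \in [0,1]}$ is strongly increasing. For every $\sD \in \fD \setminus \{\varnothing, \sU\}$ the minimal up-family $\varnothing$ lies in $\sD$ while the maximal one $\O_R$ does not, so $\d_\varnothing(\sD) = 1$ and $1 - \u_p(\sD) = p\,(1 - \tilde\chi(\sD))$. Hence
\[
\u_p'(\sD) \;=\; -(1 - \tilde\chi(\sD)) \;\le\; -p\,(1 - \tilde\chi(\sD)) \;=\; -(1 - \u_p(\sD)),
\]
so \cref{eq:def:strong:monotonicity} holds with $c = 1$. Applying \cref{th:parametrised} supplies a critical $\pc \in [0,1]$ and exponential decay of $\bbP_{\u_p}(\h_0^{\bone}(t) \neq 0)$ for every $p < \pc$. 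By \cref{prop:correspondence} this probability equals $\m_q(\h_0(t) \neq 1)$ under $p = 1 - q$, and the thresholds match via $\pc = 1 - \qc$: by attractiveness $\d_\bzero$ is the unique invariant measure of $\u_p$ if and only if the PCA starting from $\bone$ converges to $\bzero$, which under the coupling is exactly the event that the inhomogeneous BP under $\m_q$ reaches $\bone$. This translates the PCA bound into \cref{eq:BP:exp:decay} for every $q > \qc$.

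The only step beyond routine bookkeeping is checking that \cref{prop:correspondence} extends from a single update family to a finite mixture. This is immediate from its graphical construction: the up-family used at each space-time site of the associated PCA is already an independent draw from some measure on $\sU$, and replacing the two-atom measure of the homogeneous case by the finitely-atomic $\u_p$ above leaves the coupling between the PCA trajectory and the BP stabilisation intact. I expect this verification (keeping track of which randomness belongs to the initial condition and which to the family choice) to be the only mildly delicate point of the proof.
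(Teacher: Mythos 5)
Your plan follows the paper's own route (BP--PCA correspondence, linear parametrisation, sharpness), but the correspondence you invoke is misidentified and the final transfer is left more delicate than you suggest. The push-forward $\tilde\chi$ must be taken under the double-complement map $\cU\leftrightarrow\cX$ of \cref{eq:correspondence}, which is the content of \cref{prop:correspondence}; the ``minimal up-family containing $\{X\times\{-1\}:X\in\cX\cup\{\{0\}\}\}$'' map from \cref{subsec:examples} is a \emph{same-dimensional} reading of a BP as a CA and does not drop a dimension, so it is not the map you want here. The mixture extension you flag as ``the only mildly delicate point'' is exactly \cref{prop:inhomogeneous}, which the paper already records (proof omitted as identical to \cref{prop:correspondence}), so nothing new is needed there; but using the wrong map would break the identification $\pc=1-\qc$ on which everything rests.

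A second, smaller point: your verification of strong monotonicity is correct but unnecessary. The family $\u_p=p\tilde\chi+(1-p)\d_\varnothing$ is precisely the form treated by \cref{th:linear}, which the paper applies directly without passing through \cref{th:parametrised}. Finally, the line ``by \cref{prop:correspondence} this probability equals $\m_q(\h_0(t)\neq1)$'' hides the real work: \cref{eq:etatilde} equates the stationary PCA with the closure of a BP started from the bi-infinite random configuration $\1_{\cU_{x,t}=\varnothing}$, whereas \cref{eq:BP:exp:decay} concerns the BP started at a single time slice. The paper closes this by introducing the closure $C'$ of the initial condition restricted to $t\ge0$, arguing by induction that the corresponding BP $\o'$ stabilises on $\bbZ^d\times\{0,\dots,t\}$ after $t$ steps so that $(x,t)\notin C'$ iff $\o'_{(x,t)}(t)=0$, and then using attractiveness $\o'\le\o$ to conclude. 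That chain of reductions is the part your sketch still needs to make explicit.
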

 
Moving on to KCM, the following is a direct consequence of \cref{th:BP} together with \cite{Hartarsky21}*{Theorem~3.7}.
\begin{cor}
\label{cor:KCM}
Consider a KCM with update family $\cX$ contained in a half space (see \cref{th:BP}). Then the spectral gap of its generator is positive for all $q>\qc$ and $0$ for all $q<\qc$, where $\qc$ is the quantity defined in \cref{eq:def:qc} for the BP with the same choice of update family $\cX$.
\end{cor}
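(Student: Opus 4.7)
The plan is to split at $\qc$ and, in each case, reduce to an existing result, using \cite{Hartarsky21}*{Theorem~3.7} as the bridge: that theorem converts quantitative information on the decay of the BP healthy probability $\m_q(\h_0(t)\neq 1)$ into information on the KCM spectral gap at the same parameter $q$ with the same update family $\cX$. For $q>\qc$, since $\cX$ is contained in a half-space, I would apply \cref{th:BP} to obtain constants $c,C>0$ such that $\m_q(\h_0(t)\neq 1)\le Ce^{-ct}$ for all $t>0$, and plug this exponential decay into \cite{Hartarsky21}*{Theorem~3.7} to conclude that the spectral gap at $q$ is strictly positive.

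For $q<\qc$, the definition \cref{eq:def:qc} of $\qc$ directly gives $\lim_{t\to\infty}\m_q(\h_0(t)\neq 1)>0$, ruling out any decay, let alone exponential decay. The converse direction of \cite{Hartarsky21}*{Theorem~3.7}, or equivalently a standard blocked-cluster argument (by translation invariance there is, with positive density, a cluster around a site that is never infected by BP, and the indicator of such a cluster supplies a localised non-decaying test function for the KCM generator), then forces the spectral gap to vanish. The main obstacle is thus entirely packaged inside \cref{th:BP}; once that theorem is granted, \cref{cor:KCM} amounts to a mechanical composition of two black boxes and the only genuine verification is that the half-space hypothesis on $\cX$ is shared by both inputs, which is immediate.
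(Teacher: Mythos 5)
Your proposal is correct and takes essentially the same approach as the paper. The paper states \cref{cor:KCM} as a direct consequence of \cref{th:BP} together with \cite{Hartarsky21}*{Theorem~3.7}, and your two-case unpacking (exponential decay of $\m_q(\h_0(t)\neq 1)$ yields a positive gap for $q>\qc$; non-decay plus the standard blocked-cluster test function yields zero gap for $q<\qc$) is exactly what that combination delivers.
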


\subsection{Bootstrap percolation and Toom perturbations of attractive cellular automata}
\label{subsec:BP}
Finally, we discuss consequences of the PCA representation of BP used to prove \cref{th:BP}.

\paragraph{Bootstrap percolation universality}
We first need to introduce a few notions from BP universality, whose aim is to classify BP update families according to their behaviour at or around their phase transition.

We say that a BP update family $\cX$ is \emph{supercritical} if there exists a finite set $A\subset\bbZ^d$ such that $\bigcup_{t>0}\h^A(t)$ is infinite. A more geometric characterisation is available based on the notion of stable direction. We say that $u\in S^{d-1}$ (the unit sphere) is \emph{unstable} if there exists $X\in\cX$ such that for all $x\in X$ we have $\<u,x\><0$ and it is \emph{stable otherwise}. It was proved in \cite{Bollobas15}*{Theorem 7.1, Definition 1.3} that in two dimensions $\cX$ is supercritical if and only if there exists an open hemisphere of unstable directions. This is expected to generalise to any dimension.

Instead, we say that $\cX$ is \emph{subcritical} if $\qc>0$ (recall \cref{eq:def:qc}). It was proved in \cite{Balister22}*{Definition 1.2, Corollary 1.6} that $\cX$ is subcritical if and only if every open hemisphere contains an open set of stable directions.

\paragraph{Toom perturbations}
Toom \cite{Toom80} considered random space-time perturbations of attractive CA with $\u(\{\varnothing\})=0$ (otherwise every configuration becomes $\bzero$ after one step, since CA are deterministic), in order to assess whether the $\bzero$ and $\bone$ states are stable. We will not define the exact noise used there, but using attractiveness to suppress `positive' noise, it can be brought down to a noise stochastically dominated by a product measure with low parameter. For simplicity, we will work directly with the resulting product noise. Namely, we consider the CA with death $\tu$. We say that the CA $\u$ is an \emph{eroder} if, starting from any configuration $\o$ such that $\o_{(x,s)}=0$ for finitely many $(x,s)\in\bbZ^{d}\times \{-r,\dots,-1\}$, we have $\h^\o(t)=\bone$ for any $t$ large enough depending on $\o$. 

\paragraph{Interplay}
We can recover the following result, first established by Toom \cite{Toom80}, directly from the BP--PCA correspondence and BP results.
\begin{thm}
\label{th:Toom}
In dimension $d=1$ any attractive CA is an eroder iff its version with death rate $1-p$ small enough has an invariant measure different from $\d_\bzero$.
\end{thm}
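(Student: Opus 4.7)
My plan is to deduce the theorem from the BP--PCA correspondence of \cref{prop:correspondence}: it identifies the set of $0$-sites of $\tu$ started from $\bone$ with the set of sites infected by the two-dimensional BP with update family $\tilde{\cX}$ (the minimal transversals of the minimal elements of $\cU$, where $\u=\d_{\cU}$), initialised from an i.i.d.\ Bernoulli($1-p$) field of death sites. Under this identification, $\tu$ has an invariant measure different from $\d_\bzero$ for some small $1-p$ iff $\qc(\tilde{\cX})>0$, i.e.\ iff $\tilde{\cX}$ is a subcritical two-dimensional BP. The theorem then reduces to the equivalence ``$\u$ is an eroder iff $\tilde{\cX}$ is subcritical''.

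The direction ``not eroder $\Rightarrow$ not subcritical'' is the easier one: a persistent finite $0$-configuration of the CA (which exists by negation of the eroder property) supplies, via the correspondence, a finite BP seed whose infected set contains non-empty time-slices arbitrarily far in the future, and is therefore infinite; by a standard BP argument this implies $\qc(\tilde{\cX})=0$. The reverse direction ``eroder $\Rightarrow$ subcritical'' I argue as the contrapositive, using the two-dimensional dichotomy of \cite{Balister22}: if $\tilde{\cX}$ is not subcritical, some open hemisphere of $S^1$ contains no open set of stable directions; since $\tilde{\cX}\subset R\subset\bbZ\times\{-r,\dots,-1\}$ automatically yields an open set of stable directions in the lower hemisphere (a neighbourhood of the downward direction is stable), the obstruction must be in the upper hemisphere, and a short perturbation argument further rules out stable-but-isolated upper directions, placing us in the supercritical regime. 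The dichotomy then produces a finite BP seed with infinite infected set, which pulls back through the correspondence to a finite $0$-configuration of the CA that never erodes, contradicting the hypothesis.

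Once this equivalence is in hand the theorem follows. If $\u$ is an eroder, then $\qc(\tilde{\cX})>0$ and for any $1-p<\qc(\tilde{\cX})$, \cref{th:BP} applied to $\tilde{\cX}$ gives exponential decay of $\bbP(\h^\bone_0(t)=0)$; any weak-limit point of the law of $\h^\bone(t)$ is then an invariant measure for $\tu$ charging configurations of positive $1$-density, hence different from $\d_\bzero$. If $\u$ is not an eroder, then $\qc(\tilde{\cX})=0$ and the $1$-density of $\h^\bone(t)$ decays to zero for every $p<1$, so by attractiveness $\d_\bzero$ is the unique invariant measure of $\tu$. The main obstacle is the ``eroder $\Rightarrow$ subcritical'' direction, where the abstract stable-direction condition of \cite{Balister22} has to be translated into a concrete persistent finite $0$-configuration of the CA; this is precisely the step that requires the restriction $d=1$, since the full two-dimensional dichotomy is only known in that setting.
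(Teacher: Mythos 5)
Your proposal follows essentially the same route as the paper: translate the CA to a two-dimensional BP update family via \cref{prop:correspondence}, observe that ``not eroder $\Leftrightarrow$ supercritical'' and ``nontrivial invariant measure at small death rate $\Leftrightarrow$ subcritical,'' and then reduce to the supercritical/subcritical dichotomy for two-dimensional half-space families. Two points of divergence are worth flagging. First, the paper deliberately does \emph{not} invoke the full two-dimensional dichotomy of \cites{Balister16,Balister22}: it relies only on the supercriticality characterisation of \cite{Bollobas15} and the much easier sufficient condition for subcriticality, \cite{Balister16}*{Theorem 9}, with the remaining gap closed by \cref{lem:directions}, a purely geometric statement that for half-space families the stable-direction set is the closure of its interior (so no ``critical'' regime can appear); this lemma is exactly the ``short perturbation argument'' you gesture at, and spelling it out is the real content behind the $d=1$ restriction in the self-contained argument. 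Second, your final paragraph misapplies \cref{th:BP}: with $1-p<\qc$ you are in the regime $q<\qc$, where \cref{th:BP} is silent, and in fact $\bbP(\th^\bone_0(t)=0)$ does \emph{not} decay to zero there (it converges to the $0$-density of the upper invariant measure). Fortunately that step is unnecessary: for $q<\qc$ the definition \cref{eq:def:qc} already gives $\lim_{t}\m_q(\h_0(t)\neq 1)>0$, and \cref{prop:correspondence} then directly yields a nontrivial invariant measure for $\tu$.
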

Let us note that in our alternative proof of \cref{th:Toom} we will not at all require the full power of \cite{Balister22}*{Corollary 1.6} or its two-dimensional version \cite{Balister16}*{Theorem 1}. Instead, we only rely on the much easier partial result \cite{Balister16}*{Theorem 9} in view of \cref{lem:directions} below. We believe the renormalisation approach of \cite{Balister16}*{Theorem 9} to be simpler than the original proof of Toom, as well as its subsequent versions in \cites{Gacs21,Berman88,Swart22, Hartarsky22Toom} based on somewhat involved Peierls arguments. Naturally, a carefully performed Peierls argument often gives a better quantitative bound on the critical parameter (see e.g.\ \cites{Swart22,Hartarsky22Toom}), though this is seldom important. We also mention the more complex renormalisation approach of \cite{Bramson91}.

Inversely, the viewpoint of \cite{Toom80}*{Theorem 5} provides an interesting consequence for BP, thanks to the BP-PCA correspondence of \cref{prop:correspondence}. Namely, it allows us to recover the main results of \cites{Balister16,Bollobas15} restricted to families contained in half-spaces, but extended to arbitrary dimension, which have not been treated until present (but see the subsequent work \cite{Balister22}).
\begin{thm}
\label{th:orientation:BP}
A BP in any dimension whose update family is contained in a half-space (recall \cref{th:BP}) is either supercritical or subcritical (that is, either there are finite sets with infinite offspring or $\qc>0$).
\end{thm}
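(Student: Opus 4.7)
The plan is to derive \cref{th:orientation:BP} from the BP--CA correspondence \cref{prop:correspondence} together with Toom's Theorem~5 of \cite{Toom80}, as announced in \cref{subsec:BP}.

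First, using the half-space hypothesis, fix $u\in\bbR^d$ with $\<u,x\><0$ for every $x\in X\in\cX$ and change coordinates so that (the integer direction closest to) $u$ plays the role of negative time. Then $\cX$ is contained in a slab of finite $u$-thickness $r$ strictly below the hyperplane $\<u,\cdot\>=0$, so the BP update rule depends only on the strict past and defines an attractive CA $\u$ in dimension $d-1$ with range $r$ and no death. By \cref{prop:correspondence}, running the BP from an i.i.d.\ Bernoulli$(q)$ initial condition is coupled to running $\u$ from $\bone$ subject to i.i.d.\ space-time death at rate $q$, in such a way that the infected sites of the BP are exactly the dead sites of the CA; moreover, a deterministic finite initial infected set in the BP corresponds to a finite set of forced deaths in the CA.

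Next, I invoke Toom's Theorem~5, which asserts for every attractive CA in arbitrary dimension the following dichotomy: either (a) the CA is an eroder, or (b) there exists a finite set $F$ of sites such that starting the CA from $\bone\setminus F$ yields an infinite $0$-set. In case~(a), Toom \cite{Toom80}*{Theorem~1} shows that for every small enough death rate $1-p$ the CA with death $\tu=p\u+(1-p)\d_\varnothing$ admits an invariant measure distinct from $\d_\bzero$; through the correspondence, this reads as $\lim_{t\to\infty}\m_q(\h_0(t)\ne 1)>0$ for every small $q>0$, so $\qc>0$ and the BP is subcritical. In case~(b), the finite growing $0$-configuration supplied by Toom translates, through the "forced-death" half of the correspondence, to a finite initial infected set of the BP with infinite orbit, so the BP is supercritical.

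The main obstacle is ensuring that case~(b) of Toom's Theorem~5 really produces a \emph{growing} finite $0$-configuration rather than a merely non-shrinking one: only the former corresponds to supercriticality of the BP, whereas a stable-but-bounded $0$-configuration would still be compatible with $\qc=0$ and bounded orbits for all finite initial infected sets. This stronger conclusion is the content of Toom's renormalisation/Peierls argument in arbitrary dimension, and combining it cleanly with the BP--CA correspondence in dimension $d$ --- which itself has to be checked valid under only the half-space assumption, without memory pathologies --- constitutes the main technical work.
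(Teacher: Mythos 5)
Your overall plan --- half-space reduction, the BP--CA correspondence of \cref{prop:correspondence}, and Toom's arbitrary-dimension theorem --- is exactly the route the paper takes. However, the ``main obstacle'' you flag at the end is not, in fact, an obstacle; believing it is one indicates a misreading of what supercriticality becomes under the correspondence.

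Supercriticality asks for a finite set $A\subset\bbZ^{d+1}$ whose BP closure $C=\bigcup_{t>0}\h^A(t)$ is infinite. By \cref{eq:etatilde}, the $0$-set of the CA at time $t$ is precisely $\{x:(x,t)\in C\}$. So if $\cU$ is not an eroder, i.e.\ some finite initial $0$-configuration yields a $0$-set that merely stays \emph{nonempty} for all $t\ge 0$, then $C$ acquires at least one new site at every time level and is therefore infinite --- regardless of whether the $0$-set grows spatially. A ``stable-but-bounded'' $0$-configuration \emph{does} correspond to a supercritical BP: the closure accumulates along the time axis. The converse works the same way: starting from finite $A$, the closure meets each horizontal slab of width $r$ in a finite set (because $0$s spread at speed at most $r$ in the CA), so an infinite $C$ forces a nonempty $0$-set at infinitely many times, hence non-erosion.

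Consequently Toom's renormalisation/Peierls machinery plays no role in the supercritical direction; that direction is an immediate consequence of the correspondence and the definition of eroder. Toom's theorem is needed only for the other implication: eroder $\Rightarrow$ the CA with small death rate has an invariant measure other than $\d_\bzero$ $\Rightarrow$ (again by \cref{prop:correspondence}) $\qc>0$. Once the phantom obstacle is dropped, your argument coincides with the paper's and is complete; the remaining ``technical work'' you anticipate is not required.
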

Although in two dimensions this can be checked easily (see \cref{lem:directions} below) from geometric considerations based on the characterisations of supercritical and subcritical models from \cites{Bollobas15,Balister16}, it does not seem to have been noticed. We should note that the half-space condition cannot be removed, as without it already in two dimensions another, \emph{critical}, class emerges \cite{Bollobas15}.
\subsection{Organisation}
The remainder of the paper is organised as follows. In \cref{sec:decay} we prove our general results---\cref{th:main,th:parametrised} by adapting the randomised algorithm approach of Duminil-Copin, Raoufi and Tassion \cite{Duminil-Copin19}. In \cref{sec:BP} we introduce the correspondence between BP and CA and deduce \cref{th:BP,th:inhomogeneous,th:Toom,th:orientation:BP}.

\section{Sharpness of the transition}
\label{sec:decay}
\subsection{Linear parametrisation}
\label{subsec:linear}
We first seek to prove the following preliminary result, from which \cref{th:main,th:parametrised} will be deduced in \cref{subsec:parametrised}.
\begin{thm}
\label{th:linear}
Let $\u$ be a rates measure and let $\u_p=p\u+(1-p)\d_{\varnothing}$ for $p\in[0,1/(1-\u(\{\varnothing\}))]$. This nondecreasing parametrised model has a sharp transition.
\end{thm}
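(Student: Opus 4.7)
The plan is to adapt the randomised-algorithm proof of sharpness by Duminil-Copin, Raoufi and Tassion \cite{Duminil-Copin19} from Bernoulli percolation to the attractive PCA setting. First I would set $\tilde p=p(1-\u(\{\varnothing\}))$, so that $\u_p$ is the mixture of $\d_{\varnothing}$ and $\u(\cdot\mid\cU\neq\varnothing)$ with weights $1-\tilde p$ and $\tilde p$, and realise each $\cU_{x,t}$ via an independent pair $(\omega_{x,t},\cV_{x,t})$ with $\omega_{x,t}\sim\operatorname{Bernoulli}(\tilde p)$ and $\cV_{x,t}\sim\u(\cdot\mid\cU\neq\varnothing)$, taking $\cU_{x,t}=\varnothing$ when $\omega_{x,t}=0$ and $\cU_{x,t}=\cV_{x,t}$ otherwise. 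Under this coupling, all the $p$-dependence of the law lies in $\omega$, and by attractiveness $A_N:=\{\h^{\bone}_0(N)=1\}$ is an increasing event in $\omega$. Setting $\theta_N(\tilde p)=\bbP(A_N)$, which is non-increasing in $N$, the critical parameter $\pc$ of \cref{eq:def:pc} corresponds to $\tilde p_{\mathrm c}=\inf\{\tilde p\colon\theta_\infty(\tilde p)>0\}$, since $\d_{\bzero}$-uniqueness is equivalent to $\theta_\infty=0$ by attractiveness (the upper invariant measure being the limit of $\h^{\bone}(t)$).

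The core of the argument is to establish, via Russo's formula $\partial_{\tilde p}\theta_N=\sum_{(x,t)\in\bbZ^d\times\{0,\dots,N-1\}}\bbE_{\cV}[\operatorname{Inf}^{\omega}_{(x,t)}(A_N)]$ combined with the OSSS inequality applied conditionally on $\cV$ to a randomised algorithm computing $\1_{A_N}$, the DRT-type differential inequality
\[
\theta_N(\tilde p)\bigl(1-\theta_N(\tilde p)\bigr)\leq C\,\frac{S_N(\tilde p)}{N}\,\partial_{\tilde p}\theta_N(\tilde p),\qquad S_N(\tilde p):=\sum_{k=0}^{N}\theta_k(\tilde p).
\]
For the algorithm I would use a backward ancestor exploration from $(0,N)$ with a uniform random depth cutoff $K\in\{1,\dots,N\}$: starting at $(0,N)$, reveal $\omega_{y,s-1}$ at each frontier site $(y,s)$ and, when it equals $1$, also reveal $\cV_{y,s-1}$ and append $(y+R_1)\times\{s-1\}$ to the frontier, iterating until time $N-K$. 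Averaging over $K$ should produce per-site revealment of order $S_N(\tilde p)/N$, since reaching $(x,t)$ requires the ancestor process from some $(0,K)$ to survive to depth $K-t$, which by attractiveness is controlled by $\theta_{K-t}$.

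Once the differential inequality is in place, the standard dichotomy of \cite{Duminil-Copin19} concludes: either $S_\infty(\tilde p)<\infty$, which integrates to exponential decay $\theta_N(\tilde p)\leq Ce^{-cN}$ and hence \cref{eq:main:1}, or $S_\infty(\tilde p)=\infty$, which forces $\theta_\infty(\tilde p')>0$ for every $\tilde p'>\tilde p$, i.e.\ $\tilde p\geq\tilde p_{\mathrm c}$. For $p<\pc$ only the first alternative is compatible, so \cref{eq:main:1} follows. The spatial bound \cref{eq:main:2} then follows from \cref{eq:main:1} via attractiveness ($\h^A\leq\h^{\bone}$) combined with a union bound over the forward light cone of $A$ at time $t$, of size at most $|A|(2rt+1)^d$; the resulting polynomial factor is absorbed by a slight decrease of $c$.

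The main obstacle will be the revealment bound for the ancestor exploration. In Bernoulli percolation, DRT's random-radius cluster exploration yields $\delta\lesssim S_n/n$ essentially because reaching a vertex at distance $k$ forces a connection of that length, of probability $\theta_k$. The PCA analogue is less direct: the ancestor exploration branches by the full spatial neighbourhood $R_1$, so a naïve union bound over ancestor paths only produces a branching-process estimate of the form $(|R_1|\tilde p)^{K-t}$, whereas the desired $\theta_{K-t}$ must come from using attractiveness and a careful coupling between the ancestor exploration and the forward process $\h^{\bone}$. A secondary technicality is that OSSS applied conditionally on $\cV$ produces $\bbE_{\cV}[\var_{\omega}(\1_{A_N})]$, which differs from $\theta_N(1-\theta_N)$ by $\var_{\cV}(\bbP(A_N\mid\cV))$; this gap is absorbed in the subcritical regime where both quantities are of order $\theta_N$.
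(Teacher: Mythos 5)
Your overall framework (Russo formula + OSSS + randomised algorithm + the Duminil-Copin--Raoufi--Tassion integration lemma, followed by a union bound for the spatial statement) is the right one, and your coupling of $\cU_{x,t}$ via an independent pair $(\omega_{x,t},\cV_{x,t})$ matches the one used in the paper's version of Russo's formula. However, the two places you flag as problematic are precisely where the paper diverges from your plan, and your proposal does not close either gap.

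The central issue is your choice of algorithm. You propose a \emph{backward} ancestor exploration from $(0,N)$, pruned only by $\omega_{y,s}=0$, and yourself observe that a na\"{\i}ve bound gives $(|R_1|\tilde p)^{N-t}$ rather than $\theta_{N-t}$. This is not a technicality that can be patched by ``attractiveness and a careful coupling'': when $|R_1|\tilde p>1$, the backward exploration is a supercritical branching process and reaches a fixed $(x,t)$ with probability bounded away from $0$ uniformly in $N-t$, even though $\theta_{N-t}\to0$ throughout the subcritical regime; so the revealment genuinely exceeds what you need. One cannot prune the backward exploration by forward-process information, since the forward state at an unexplored site is precisely what is not yet determined. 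The paper instead runs a \emph{forward} exploration: pick $k$ uniformly, run the PCA from time $k$ with initial condition $\bone$, and at each step reveal $\cU_{x,t+k}$ only if the currently explored data do not already force $\o^{\bone}_x(t)=0$; if $\o^{\bone}_0(n-k)=0$, stop (by attractiveness $\h^{\bone}_0(n)=0$), and otherwise reveal all of $S$. Then the revealment of a given $(x,t)$ conditional on $k$ is at most $\theta_{n-k}(p)+\1_{t\ge k}\,\theta_{t-k}(p)$, which is exactly what is needed once you average over $k$. This forward direction is the key device of the proof and is absent from your proposal.

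The secondary gap you flag is also real and is handled differently in the paper. Conditioning the OSSS inequality on $\cV$ yields $\bbE_{\cV}[\var_{\omega}(\1_{A_N}\mid\cV)]$ rather than $\var(\1_{A_N})=\theta_N(1-\theta_N)$, and the discrepancy is $\var_{\cV}(\bbP(A_N\mid\cV))$, which you do not control. The paper sidesteps this entirely by proving an OSSS-type inequality directly on the product space $\sU^n$ (this is the content of its Lemma on the variance bound): the observation is that for an \emph{increasing event} one can replace a single coordinate by $\varnothing$ and obtain the pointwise pivotality bound $|\1_A(\eta)-\1_A(\omega)|\le\1_{i\text{ piv}}(\eta)+\1_{i\text{ piv}}(\omega)$, so that the original OSSS/DCRT proof goes through for the full (non-binary, non-totally-ordered) measure $\bigotimes_i\u_p$ without any conditioning. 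Together with the Russo formula (also proved by conditioning on $\cU^1_{x,t}$, which you do have), this gives $\var(\1_{A_n})$ and the sum of pivotality probabilities on the two sides of the same inequality with no residual variance term.

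In short: your plan identifies the correct obstacles but does not overcome them; the forward algorithm and the direct OSSS-for-events lemma are both needed and are the two genuinely new ingredients in the paper's proof.
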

The first step is a Russo formula adapted to our situation. We say that a space-time site $(x_0,t_0)$ is \emph{pivotal} for an event $A$ and realisation of the $\cU_{x,t}$ variables, if $A$ occurs for $(\cU_{x,t})_{(x,t)\in\bbZ^{d+1}}$, but it does not occur if we replace $\cU_{x_0,t_0}$ by $\varnothing$.
\begin{lem}
\label{prop:russo}
Let $\u$ be a rates measure and let $\u_p=p\u+(1-p)\d_{\varnothing}$ for $p\in[0,1]$. Let $A_n$ be the event that $\h_0^{\bone}(n)\neq 0$ and set $\theta_n(p)=\bbP_{\u_p}(A_n)$. Then for all $n\ge 0$ and $p\in(0,1]$
\[\theta_n'(p)=\sum_{x,t}\bbP_{\u_p}\left((x,t)\text{ is pivotal for }A_n\right).\]
\end{lem}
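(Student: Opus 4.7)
The plan is to prove this as the classical Russo formula for Bernoulli product measures, after reducing to that setting through a two-stage coupling that isolates the $p$-dependence. First, I would observe that $A_n=\{\h_0^{\bone}(n)\neq 0\}$ is measurable with respect to the restriction of the field $(\cU_{x,t})$ to the backward light cone of $(0,n)$, a finite set of space-time sites (of size $\lesssim n^{d+1}$). Consequently $\theta_n$ is a polynomial in $p$ and in particular smooth on $(0,1]$.

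The central idea is to decompose $\u_p=p\,\u+(1-p)\,\d_{\varnothing}$ by sampling in two independent stages. Introduce independent i.i.d.\ fields $(\varepsilon_{x,t})$ with $\varepsilon_{x,t}\sim\mathrm{Bernoulli}(p)$ and $(\tilde\cU_{x,t})$ with $\tilde\cU_{x,t}\sim\u$, and define $\cU_{x,t}:=\tilde\cU_{x,t}$ when $\varepsilon_{x,t}=1$ and $\cU_{x,t}:=\varnothing$ otherwise. The resulting field has law $\u_p^{\otimes\bbZ^{d+1}}$, and all of the $p$-dependence is now concentrated in the Bernoulli field. Conditionally on the quenched data $\tilde\cU$, the quantity $f_{\tilde\cU}(p):=\bbP_{\varepsilon}(A_n\mid\tilde\cU)$ is the probability under a product Bernoulli$(p)$ measure of an event that is monotone increasing in each $\varepsilon_{x,t}$. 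The required monotonicity comes from attractiveness: enlarging a single up-family $\cU_{x,t}$ (with respect to inclusion in $\sU$) can only enlarge the trajectory $\h^{\bone}$, which follows by a routine induction on time using the update rule \cref{eq:def:PCA} and the up-closedness of the families in $\sU$. Flipping $\varepsilon_{x,t}$ from $0$ to $1$ precisely enlarges $\cU_{x,t}$ from $\varnothing$ to $\tilde\cU_{x,t}\supset\varnothing$, so monotonicity in $\cU_{x,t}$ lifts to monotonicity in $\varepsilon_{x,t}$.

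Applying the classical Russo formula to $f_{\tilde\cU}$ now yields
\[
f_{\tilde\cU}'(p)=\sum_{(x,t)}\bbP_{\varepsilon}\bigl(\varepsilon_{x,t}\text{ is pivotal for }A_n\mid\tilde\cU\bigr),
\]
and averaging over $\tilde\cU$ gives a corresponding expression for $\theta_n'(p)$. The last step is to match the Russo-pivotal event for $\varepsilon_{x,t}$ with the notion of pivotal used in the statement. By the monotonicity above, $\varepsilon_{x,t}$ is pivotal if and only if $A_n$ holds when $\cU_{x,t}=\tilde\cU_{x,t}$ and fails when $\cU_{x,t}=\varnothing$; on the realization with $\varepsilon_{x,t}=1$ this is precisely the statement that $A_n$ holds and would fail upon replacing $\cU_{x,t}$ by $\varnothing$, while on $\{\varepsilon_{x,t}=0\}$ the current configuration already has $\cU_{x,t}=\varnothing$ and the pivotal event is empty.

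There is no substantive obstacle; the argument is a direct transcription of Russo's formula. The only point that must be invoked carefully is the monotonicity of $A_n$ in each up-family $\cU_{x,t}$, which relies on attractiveness in an essential way and drives both the applicability of Russo's formula and the subsequent identification of the pivotal event with the one in the statement.
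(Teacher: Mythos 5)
Your argument coincides with the paper's: both expose the $p$-dependence through the two-stage sampling $\cU_{x,t}=\widetilde\cU_{x,t}\1_{\varepsilon_{x,t}=1}$ with $\widetilde\cU_{x,t}\sim\u$ and $\varepsilon_{x,t}\sim\mathrm{Bernoulli}(p)$, apply the classical Russo formula conditionally on the quenched $\u$-field, and average (the paper uses $\cU^1_{x,t}$ and $\1_{X_{x,t}\le p}$ in place of your $\widetilde\cU_{x,t}$ and $\varepsilon_{x,t}$). Your extra remarks --- finiteness of the light cone, polynomiality of $\theta_n$, and monotonicity via attractiveness --- are consistent with, and fill in, what the paper leaves implicit.

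One point both you and the paper slide over, in your final ``matching'' step, deserves a flag. The Russo-pivotal event for the Bernoulli bit $\varepsilon_{x,t}$ (given $\widetilde\cU$) is measurable with respect to $\widetilde\cU$ and $(\varepsilon_{y,s})_{(y,s)\ne(x,t)}$; it does not depend on $\varepsilon_{x,t}$. The notion of pivotality used in the statement, on the other hand, additionally requires $A_n$ to hold \emph{for the realisation}, which (as you note) forces $\cU_{x,t}\ne\varnothing$, i.e.\ $\varepsilon_{x,t}=1$. By independence one therefore has $\bbP_{\u_p}((x,t)\text{ is pivotal})=p\,\bbE_{\widetilde\cU}\bigl[\bbP_\varepsilon(\text{Russo-pivotal}\mid\widetilde\cU)\bigr]$, so the chain you describe actually yields $\theta_n'(p)=\frac1p\sum_{x,t}\bbP_{\u_p}((x,t)\text{ is pivotal for }A_n)$ rather than the stated equality; the $n=0$ case, where $\theta_0(p)=p(1-\u(\{\varnothing\}))$ while the pivotal sum equals $p(1-\u(\{\varnothing\}))$, exhibits the missing $1/p$ explicitly. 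This is harmless for the paper --- only the lower bound $\theta_n'(p)\ge\sum_{x,t}\bbP_{\u_p}(\cdot)$ is used in the proof of \cref{th:linear} --- and your proposal inherits exactly the same slack, but if you want an honest identity you should either carry the factor $1/p$ or rephrase the first clause of pivotality as ``$A_n$ occurs with $\cU_{x,t}$ replaced by $\widetilde\cU_{x,t}$''.
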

\begin{proof}
Fix an i.i.d.\ random field $\cU_{x,t}^1$ with law $\u$ and i.i.d.\ random variables $X_{x,t}$ uniform on $[0,1]$ and define
\[\cU_{x,t}^p=\begin{cases}\cU_{x,t}^1&\text{if }X_{x,t}\le p\\
\varnothing&\text{if }X_{x,t}> p.\end{cases}\]
Conditioning on the $\cU_{x,t}^1$, we can then proceed as in the proof of the standard Russo formula (see e.g.\ \cite{Grimmett99}*{Sec. 2.4}). We then average over $\cU_{x,t}^1$ to obtain the desired equality.
\end{proof}

The next step is to adapt the Duminil-Copin--Raoufi--Tassion version of the O'Donnell--Saks--Schramm--Servedio decision tree result (see \cites{ODonnell05,Duminil-Copin19} for background). It is important to note that the support of $\u$ is not necessarily binary or even totally ordered, so it is not clear how to adapt \cite{Duminil-Copin19}*{Theorem 1.1}. We circumvent this problem by restricting our attention to events rather than real-valued observables.
\begin{lem}
\label{lem:DCRT:var}
Let $n\in\bbN$ and $A\subset \sU^n$ be an increasing event (w.r.t.\ the pointwise inclusion order). Fix a randomised algorithm determining the occurrence of $A$ and denote by $\d_i$ the probability that it reveals the value of the $i$-th up-family. Then 
\[\var_\u(\1_A)\le 2\sum_{i=1}^{n}\d_i\cdot\bbP_\u\left(i\text{ is pivotal for }A\right)\]
where $\var$ and $\bbP$ are w.r.t.\ a product measure $\bigotimes_{i=1}^n\u$ on $\sU^n$.
\end{lem}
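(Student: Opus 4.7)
The plan is to adapt the decision tree variance inequality of \cite{Duminil-Copin19} (an enhancement of the OSSS inequality \cite{ODonnell05}) to the product space $(\sU^n, \bigotimes_{i=1}^{n}\u)$, applied to the indicator $f=\1_A$. The paragraph preceding the lemma already flags that \cite{Duminil-Copin19}*{Theorem 1.1}, stated for real-valued observables on $[0,1]^n$, cannot be invoked directly because its proof exploits the total order of $[0,1]$ via a rearrangement argument. Restricting to a Boolean target function circumvents this: the variance decomposition along the filtration of the decision tree's reveals, combined with a conditional Cauchy--Schwarz, treats every coordinate as an abstract probability space and uses only the product structure of the joint measure. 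Carried out in this generality, it produces, for any randomised algorithm with coordinate-reveal probabilities $\d_i$,
\[\var_\u(\1_A)\le \sum_{i=1}^{n}\d_i\cdot\bbE_\u\bigl[\var\bigl(\1_A\mid(\cU_j)_{j\neq i}\bigr)\bigr].\]

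Next, I would convert the conditional-variance influence on the right-hand side into the pivotality probability, using that $A$ is increasing. Fix $(\cU_j)_{j\neq i}$ and let $g\colon\sU\to\{0,1\}$ be defined by $g(\cV)=\1_A(\cU_1,\dots,\cU_{i-1},\cV,\cU_{i+1},\dots,\cU_n)$. Since $A$ is increasing in the pointwise-inclusion order and $\varnothing$ is the unique minimum of $\sU$, either $g(\varnothing)=1$---whence $g\equiv 1$ and both the conditional variance and the conditional pivotality probability vanish---or $g(\varnothing)=0$. In the latter case, pivotality in the paper's sense (``$A$ occurs, but fails when $\cU_i$ is replaced by $\varnothing$'') has conditional probability $q:=\u(\{g=1\})$, while the conditional variance of the Bernoulli$(q)$ random variable $\1_A$ equals $q(1-q)\le q$. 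Averaging over $(\cU_j)_{j\neq i}$ yields $\bbE_\u[\var(\1_A\mid(\cU_j)_{j\neq i})]\le\bbP_\u(i\text{ is pivotal for }A)$, and inserting this into the previous display proves the lemma (the factor $2$ in the statement absorbs the slack from $q(1-q)\le q$).

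The main obstacle---and precisely the reason the lemma is phrased for events rather than real-valued observables---is the first step: justifying the decision tree variance inequality on a non-totally-ordered alphabet. In \cite{Duminil-Copin19}, the passage from the Boolean case to real-valued $f$ on $[0,1]^n$ uses a rearrangement that genuinely requires a total order. When $f$ takes only two values this extension is unnecessary: the martingale telescope $\var(\1_A)=\sum_t\bbE\bigl[(\bbE[\1_A\mid\cF_t]-\bbE[\1_A\mid\cF_{t-1}])^2\bigr]$ along the reveals of the decision tree, and the bound on each martingale increment by the product of $\d_{i_t}$ and a conditional variance at the revealed coordinate, rely solely on independence of the $\cU_j$ and boundedness of $\1_A$. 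Executing this generalisation rigorously is the only non-routine ingredient; the reduction to pivotality in the second step is then a Bernoulli variance estimate.
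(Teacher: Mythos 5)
Your route differs from the paper's: the paper keeps the Duminil-Copin--Raoufi--Tassion (DCRT) proof intact and merely replaces one pointwise inequality (their equation~(5)) with the bound $|\1_A(\h)-\1_A(\o)|\le\1_{i\text{ piv}}(\h)+\1_{i\text{ piv}}(\o)$, which is where the factor $2$ in the lemma comes from. You instead propose to prove a conditional-variance form of OSSS in the first step and then convert conditional variance to pivotality. Your second step is fine (it is exactly the Bernoulli bound $q(1-q)\le q$ applied conditionally, using monotonicity of $A$ to identify the pivotality probability with~$q$ or $0$).

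The gap is in the first step, and it is not cosmetic. The martingale telescope along the decision tree's filtration does yield, after the Jensen-type estimate you have in mind,
$\var(\1_A)\le\sum_i\bbE[\1_{\{i\text{ revealed}\}}\var(\1_A\mid(\cU_j)_{j\neq i})]$,
but you cannot replace $\1_{\{i\text{ revealed}\}}$ by its expectation $\d_i$: the revealment event and $\var(\1_A\mid(\cU_j)_{j\neq i})$ are not independent, and in fact are typically \emph{positively} correlated, since any sensible algorithm reveals coordinate $i$ precisely on those $(\cU_j)_{j\neq i}$ for which $i$ matters. (For majority of three fair bits with the obvious adaptive tree, $\bbE[\1_{\{3\text{ rev}\}}\var(\cdot\mid\cdot)]$ is twice $\d_3\bbE[\var(\cdot\mid\cdot)]$, so the factorisation fails as an inequality.) Making the $\d_i$ appear is exactly what the OSSS/DCRT coupling (the resampled ``fresh'' configuration $\o^\t$) is for, and that argument yields $\var(\1_A)\le\sum_i\d_i\,\bbE|\1_A-\1_A^{(i)}|$, i.e.\ $2\sum_i\d_i\,\bbE[\var(\1_A\mid(\cU_j)_{j\neq i})]$ --- with the factor $2$, not without it. Your displayed inequality is thus a genuine, unproved sharpening of OSSS (not ``slack to be absorbed''); the sketch via ``only independence and boundedness'' does not deliver it. Replacing it by the correct factor-$2$ version (or following the paper's modification of DCRT's proof directly) repairs the argument and yields the lemma as stated.
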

\begin{proof}
The result is proved like Theorem 1.1.\ of \cite{Duminil-Copin19} (also see Remark 2.2.\ there). It suffices to replace their equation (5) by the fact that if $\h\in\sU^n$ and $\o\in\sU^n$ differ only at the $i$-th up-family, then
\begin{equation}
\label{eq:DCRT:piv}
\left|\1_A(\h)-\1_A(\o)\right|\le \1_{i\text{ is pivotal for $A$}}(\h)+\1_{i\text{ is pivotal for $A$}}(\o).
\end{equation}
To see this, assume w.l.o.g.\ that $\h\not\in A$ (if $\1_A(\h)=\1_A(\o)$ there is nothing to prove). Then replacing $\h_i$ by $\varnothing$ does not trigger the occurrence of $A$, as $A$ is increasing. Hence, either $\o\not\in A$ and the l.h.s.\ in \cref{eq:DCRT:piv} is 0, or $i$ is pivotal for $A$ and $\o$, so the r.h.s.\ is 1.
\end{proof}

Our next task is to define a suitable randomised algorithm to which \cref{lem:DCRT:var} will be applied. 

\begin{lem}
\label{lem:algo}
Fix a parametrised model and $p\in[p_1,p_2]$. Recall $A_n=\{\h_0^{\bone}(n)\neq 0\}$ and $\theta_n(p)=\bbP_p(A_n)$. For each integer $n\ge 1$ there exists a randomised algorithm determining the occurrence of $A_n$ such that for every $(x,t)\in\bbZ^{d+1}$ its revealment probability $\d_{x,t}$ satisfies
\[\d_{x,t}\le \frac{2}{n}\sum_{i=0}^{n-1}\theta_i(p).\]
\end{lem}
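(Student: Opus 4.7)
The plan is to adapt to the PCA setting the two-phase randomised-algorithm construction of Duminil-Copin, Raoufi and Tassion \cite{Duminil-Copin19}. We pick $k\in\{0,1,\dots,n-1\}$ uniformly at random and use $k$ as a splitting time between two adaptive explorations. Let $\h^{(k)}$ denote the auxiliary PCA driven by the same field $(\cU_{x,t})$ as $\h^{\bone}$ but re-initialised to $\bone$ at times $k-r,\dots,k-1$. Attractiveness gives $\h^{\bone}\le\h^{(k)}$ from time $k-r$ onwards, so $A_n\subset A_n^{(k)}:=\{\h^{(k)}_0(n)=1\}$, which has probability $\theta_{n-k}(p)$ by time-translation invariance.

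Phase~1 computes $\h^{(k)}_0(n)$ by an adaptive backward BFS from $(0,n)$ revealing $\cU_{x,t}$ only for $t\ge k$, with the $\bone$ boundary used at times $k-r,\dots,k-1$. At each visited site we reveal $\cU_{x,t}$ and then enquire the (recursively determined) states at $R+(x,t)$ in an order respecting the minimal elements of the revealed up-family, stopping as soon as the aliveness or deadness of $(x,t)$ in $\h^{(k)}$ is decided. If Phase~1 yields $0$, we output $0$, which is correct since $A_n\subset A_n^{(k)}$. Otherwise Phase~2 is triggered: for each site $(y,s)$ with $s\in\{k-r,\dots,k-1\}$ that was actually enquired as a leaf in Phase~1, we compute the true value $\h^{\bone}_y(s)$ by a further backward exploration that reveals only $\cU_{z,t'}$ with $t'<k$, and we then substitute these values into the Boolean tree produced by Phase~1 to read off $\h^{\bone}_0(n)$.

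Thanks to the BFS-style organisation of Phase~1 and attractiveness, a site $(x,t)$ with $t>k$ is revealed only when it is alive in $\h^{(k)}$, which by stationarity happens with probability $\theta_{t-k}(p)$. Phase~2, and hence any revealment at $t<k$, is executed only on $A_n^{(k)}$, an event of probability $\theta_{n-k}(p)$. Averaging over $k$:
\[\d_{x,t}\le\frac{1}{n}\left(\sum_{k=0}^{t}\theta_{t-k}(p)+\sum_{k=t+1}^{n-1}\theta_{n-k}(p)\right)=\frac{1}{n}\left(\sum_{j=0}^{t}\theta_j(p)+\sum_{j=1}^{n-t-1}\theta_j(p)\right)\le\frac{2}{n}\sum_{i=0}^{n-1}\theta_i(p),\]
as each $\theta_i(p)$ appears at most twice in the middle expression.

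The main obstacle is organising Phase~1 so that the ``visited implies alive in $\h^{(k)}$'' dichotomy truly holds beyond the additive case. Concretely, at each visited site the children in $R+(x,t)$ must be queried in an order compatible with a chosen system of minimal elements of $\cU_{x,t}$, and one stops at the first moment the aliveness or deadness of $(x,t)$ is determined, so that attractiveness prevents any descent into dead subtrees. This is the PCA counterpart of the BFS exploration of the cluster of $\partial B_k$ in the percolation argument of \cite{Duminil-Copin19}, where the independence of edge states plays the role of the up-set structure here.
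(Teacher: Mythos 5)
The exploration in the paper (and in the analogous percolation argument of \cite{Duminil-Copin19}, where one grows the cluster of the random shell $\partial\L_k$) runs \emph{forward} from the random time $k$, not backward from $(0,n)$; the paper explicitly flags this as a deliberate change from the backward scheme of \cite{Hartarsky21}. This is not cosmetic: your key claim that a site $(x,t)$ with $t>k$ is revealed only when it is alive in $\h^{(k)}$ fails. The root $(0,n)$ is revealed unconditionally, before anything else is examined, whereas it is alive in $\h^{(k)}$ only with probability $\theta_{n-k}(p)$, which can be tiny; more generally a backward scheme cannot certify that a visited site is \emph{dead} without first revealing its up-family and probing some of its predecessors, so dead sites are necessarily revealed. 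Concretely, take $d=1$, $r=1$, $\cU=\{A\subset R:(0,-1)\in A\}$ (copy the value directly below) and $\u_p=p\d_\cU+(1-p)\d_\varnothing$, so $\theta_m(p)=p^{m+1}$. Your Phase~1 then reveals $\cU_{0,t}$ exactly when the up-families at $(0,t+1),\dots,(0,n)$ are all non-empty, i.e.\ with probability $p^{n-t}$, which for $t$ close to $n$ vastly exceeds your bound $\theta_{t-k}(p)=p^{t-k+1}$; in particular $\d_{0,n}=1$ while the claimed bound $\frac2n\sum_{i<n}\theta_i(p)$ can be made arbitrarily small. There is also a secondary problem with Phase~2: the Boolean tree from Phase~1 is pruned as soon as the state of a visited site is decided under the $\bone$ boundary, so after substituting the true values $\h^\bone_y(s)$ for $s<k$, previously pruned branches may become relevant (a child that certified aliveness of its parent may now be dead), and the substitution alone does not compute $\h^\bone_0(n)$ without further, unaccounted-for, revelations.

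The paper's algorithm avoids both problems by exploring forward. Pick $k$ uniform in $\{1,\dots,n\}$, run an auxiliary process $\o^\bone$ from $\bone$ at times $k-r,\dots,k-1$, and at each subsequent time step reveal $\cU_{x,t}$ only for those $x$ whose already-determined neighbourhood at times $t-r,\dots,t-1$ does not already force $\o^\bone_x(t-k)=0$. If $\o^\bone_0(n-k)=0$ then $\h^\bone_0(n)\le\o^\bone_0(n-k)=0$ by attractiveness and the algorithm halts with output $0$; otherwise it reveals all remaining up-families in the cone $S$. This yields revealment at $(x,t)$ of at most $\theta_{n-k}(p)+\1_{t\ge k}\theta_{t-k}(p)$, i.e.\ the distance to the random slice $k$, not to $(0,n)$, governs the bound. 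Your final averaging over $k$ is fine and is essentially what the paper does; what is missing is a forward exploration underneath it.
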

\begin{proof}
Clearly, it suffices to restrict our attention to the up-families in 
\begin{equation}
    \label{eq:def:S}
    S=\left\{(x,t)\in \bbZ^{d+1}:|x|\le r\cdot (n-t),t\in[0,n]\right\},
\end{equation}
where $r$ is the range of the process. The algorithm proceeds as follows (following \cite{Duminil-Copin19}*{Lemma 3.2} and \cite{Hartarsky21}*{Lemma 7.3}). Select a number $k$ uniformly at random in $\{1,\dots,n\}$. Contrary to \cite{Hartarsky21}, we will explore forward in time, starting from time $k$. For $t\in\{0,\dots,n-k\}$ denote by $\o^\bone(t)$ the configuration with initial condition $\bone$ and up-families $\cU'_{x,t}=\cU_{x,t+k}$, where $\cU_{x,k}$ are the up-families in the graphical construction of $\h$. We initialise the algorithm at $t=0$. We explore the state of each $\cU_{x,t+k}$ for $(x,t+k)\in S$ such that the currently explored up-families do not allow to conclude that $\o^\bone_x(t)=0$ (so, at the first step that is all $\cU_{x,k}$ for $(x,k)\in S$). Then we increment $t$ and repeat the previous operation. Upon reaching $t=n-k$, if we are able to conclude that $\o^{\bone}_0(n-k)=0$, we terminate the algorithm (since we know by attractiveness that $\h^\bone_0(n)\le \o^{\bone}_0(n-k)=0$). Otherwise we reveal all up-families in $S$ to determine if $A_n$ occurs and terminate.

Next observe that, conditionally on $k$, the probability of revealing $U_{x,t}$ for $(x,t)\in S$ is at most $\theta_{n-k}(p)+\1_{t\ge k}\theta_{t-k}(p)$, the first term bounding the probability that we reach $t=n-k$ without having $\o_0^{\bone}(n-k)=0$. Averaging over the law of $k$, we obtain the desired conclusion.
\end{proof} 
We are now ready to assemble the proof of \cref{th:linear}.
\begin{proof}[Proof of \cref{th:linear}]
Fix $\u\neq\d_\varnothing$ and $\pc>0$ (otherwise the statement is trivial). Up to linear reparametrisation of the model, we may further assume that $\u(\{\varnothing\})=0$, $p_1=0$ and $p_2=1$. Let $\u_p=p\u+(1-p)\d_{\varnothing}$ for $p\in[0,1]$. This clearly defines a nondecreasing parametrised model, since $\cU\supset\varnothing$ for all $\cU\in\sU$. Define $A_n$, $\theta_n(p)$ and $S$ as above (see \cref{lem:algo} and \cref{eq:def:S}). By \cref{lem:DCRT:var,lem:algo}, for all $p\in[0,1]$ we have
\[\theta_n(p)(1-\theta_n(p))\le \frac{4}{n}\bbE_{\u_p}[N]\sum_{i=0}^{n-1}\theta_{i}(p),\]
where $N$ is the number of $(x,t)\in S$ which are pivotal for $A_n$. Then, applying \cref{prop:russo} we get that for $p\in(0,1]$
\[\theta_n'(p)\ge \frac{n\theta_n(p)(1-\theta_n(p))}{4\sum_{i=0}^{n-1}\theta_{i}(p)}.\]

Fix $\varepsilon \in(0,\pc)$. Observe that for any $p\in[\varepsilon,\pc-\varepsilon]$ we have $1-\theta_n(p)\ge \u_p(\{\varnothing\})\ge \varepsilon$. Thus, we may apply \cite{Duminil-Copin19}*{Lemma 3.1} in this interval to get that either \cref{eq:main:1} holds for $\u_p$ for all $p\in[\e,\pc-\varepsilon)$, or there exists $p\in(\varepsilon,\pc-\varepsilon)$ such that $\theta(p)>0$. However, the latter contradicts the definition of $\pc$, \cref{eq:def:pc}. Since \cref{eq:main:1} is trivial for $\u_0=\d_\varnothing$, we have established \cref{eq:main:1} for $\u_p$ for all $p\in[0,\pc)$.

Finally, it remains to derive \cref{eq:main:2}. Starting from a finite set $A$, the only sites which may be in state $1$ at time $t$ are those at distance at most $rt$ from a site in $A$. Hence, taking the union bound of \cref{eq:main:1} over these sites and using attractiveness, we get that for all $p\in[0,\pc)$
\[\bbP_{\u_p}\left(\h^A(t)\neq \bzero\right)\le |A|rt\max_{x\in\bbZ^d}\bbP_{\u_p}\left(\h^A_{x}(t)\neq0\right)\le C|A|rte^{-ct}\le C'|A|e^{-c't}\]
for a suitable choice of $C',c'>0$ depending on $p$, yielding \cref{eq:main:2}.
\end{proof}

\subsection{Parametrisations}
\label{subsec:parametrised}

With \cref{th:linear}, we are in position to conclude the proof of \cref{th:main}.
\begin{proof}[Proof of \cref{th:main}]
Let $\u$ be in the interior of the set of absorbing attractive PCA such that $\d_\bzero$ is their unique invariant measure, denoted by $\Int(S)$. Clearly, we cannot have $\u(\{\varnothing\})=0$, since in that case $\d_\bone$ is invariant. Therefore, we may consider the linearly parametrised model $\u_p=p\u+(1-p)\d_\varnothing$ for $p\in[0,1/(1-\u(\{\varnothing\}))]$. Since this parametrised model is absorbing and $\u_1\in \Int(S)$, necessarily $\pc>1$. But then \cref{th:linear} yields \cref{eq:main:1,eq:main:2} for $\u=\u_1$.

Similarly, assume that $\d_\bzero$ is the unique invariant measure of the $\u$-PCA, but not necessarily $\u\in \Int(S)$. Then \cref{th:linear} gives that $\pc\ge 1$, so that \cref{eq:main:1,eq:main:2} hold for any $p<1$. Then \cref{prop:open} below allows us to conclude that $\u_p\in \Int(S)$ for $p\in[0,1)$ and conclude that $\u\in\overline{\Int(S)}$.
\end{proof}
\begin{prop}
\label{prop:open}
The set of attractive absorbing PCA for which there exist $c,C>0$ so that \cref{eq:main:1} holds for all $t>0$ is open.
\end{prop}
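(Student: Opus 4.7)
The plan is to reduce openness to a finite-size criterion and use continuity of the finite-time survival probability $\theta_\u(T) := \bbP_\u(\h_0^\bone(T) \neq 0)$ in the rate measure $\u$. For any fixed $T$, the event $\{\h_0^\bone(T) \neq 0\}$ depends only on the up-families $(\cU_{x,t})$ in the backward space-time light cone of $(0, T)$, which is a finite set. Since $\sU$ itself is finite, $\theta_\u(T)$ is a polynomial in the weights $(\u(\{\cU\}))_{\cU \in \sU}$, hence continuous on the (finite-dimensional) simplex of attractive absorbing PCA.

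The main step will be a sub-multiplicativity bound of the form
\[\theta_\u(s) \,\le\, V(T)\,\theta_\u(T)\,\theta_\u(s - T - r),\]
valid for some polynomial $V(T)$ in $T$ and every $s \ge T + r$. To prove it, I would condition on the restriction $\zeta$ of $\h^\bone$ to the time slab $[s - T - r, s - T - 1]$. By the Markov property and attractiveness, the conditional survival probability $\bbP(\h_0^\zeta(T) \neq 0 \tc \zeta)$ is at most $\theta_\u(T) \cdot \1_{\zeta \neq \bzero \text{ on the backward light cone of }(0, s)}$. A union bound over that light cone reduces matters to a bound on each $\theta_\u(t)$ for $t \in [s - T - r, s - T - 1]$, which in turn will follow from the monotonicity $\theta_\u(t+1) \le \theta_\u(t)$. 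The monotonicity itself is a consequence of attractiveness combined with time-translation invariance of the graphical construction: $\h^\bone(t+1)$ coincides with the $t$-step evolution from $\h^\bone(1) \le \bone$ using $(\cU_{x,s})_{s \ge 1}$, which is pointwise dominated by the $t$-step evolution from $\bone$ with the same up-families, and the latter has the law of $\h^\bone(t)$.

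Iterating the sub-multiplicativity inequality yields $\theta_\u(n(T+r)) \le (V(T)\,\theta_\u(T))^n$, giving a finite-size criterion: whenever $V(T)\,\theta_\u(T) < 1$ for some $T$, the function $\theta_\u$ decays exponentially. To conclude, given $\u$ satisfying \cref{eq:main:1}, choose $T$ so large that $V(T)\,\theta_\u(T) < 1/2$; this is possible since $\theta_\u(T)$ decays exponentially while $V(T)$ grows polynomially. By continuity, every $\u'$ in a sufficiently small neighborhood within the simplex of attractive absorbing PCA satisfies $V(T)\,\theta_{\u'}(T) < 1$, and therefore exhibits exponential decay. The main technical obstacle is the careful bookkeeping of memory $r \ge 1$ in the sub-multiplicativity step, which forces the block size to be $T + r$ rather than $T$ and is what makes the monotonicity of $\theta_\u$ indispensable.
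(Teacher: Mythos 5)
Your proof is correct, but it takes a genuinely different route from the paper. The paper's argument is a renormalisation: it partitions space-time into large boxes, uses \cref{eq:main:1} to make the probability of a ``bad box'' small, couples the up-family fields of $\u$ and a nearby $\n$ so that they disagree only rarely, invokes the Liggett--Schonmann--Stacey theorem to dominate the bad boxes by a highly subcritical product measure, and then reads off the exponential decay for $\n$ from exponential decay of cluster sizes in subcritical oriented percolation. Your argument instead establishes a sub-multiplicativity inequality
\[
\theta_\u(s)\ \le\ V(T)\,\theta_\u(T)\,\theta_\u(s-T-r),
\]
derived by conditioning on the Markov-chain state at time $s-T$, exploiting absorption to kill the process when the light-cone slice is $\bzero$, attractiveness to replace a general slice by $\bone$, and a union bound with monotonicity of $\theta_\u$ (which, as you note, requires tracking the full memory stack $(\h^\bone(u-r),\dots,\h^\bone(u-1))$ rather than a single slice). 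Iterating this inequality gives a finite-size criterion $V(T)\theta_\u(T)<1$, and openness follows because $\theta_\u(T)$ is a polynomial in the finitely many rates, hence continuous on the simplex of absorbing attractive PCA. What your approach buys is elementariness: it avoids the LSS domination theorem and the renormalisation geometry entirely, replacing them with a one-line iteration; the cost is a somewhat more delicate conditioning step, where one must be careful that the bound $\theta_\u(T)\cdot\1_{\zeta\neq\bzero\text{ on the light cone}}$ uses absorption in an essential way (it fails without $\u(\{\O_R\})=0$, which is exactly the reason the statement is restricted to absorbing PCA). Both proofs are valid, and the finite-size-criterion formulation you give is arguably closer in spirit to the treatment of $\pc$ elsewhere in the paper.
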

\begin{proof}
We use a standard renormalisation argument. Fix $\u$ such that \cref{eq:main:1} holds for some $c,C>0$ and let $\e>0$ small enough to be chosen later. Partition $\bbZ^{d+1}$ into the boxes $B_{x,t}=(xL,tL)+[0,L)^{d+1}$ for some large $L$ to be chosen later. Let $\cB_{x,t}$ denote the event that the $\u$-PCA with initial condition $\bone$ and up-family field translated by $-L(x,t)$ is $1$ for some $(y,s)\in [0,L)^d\times [L-r,L)$. By \cref{eq:main:1} and the union bound we have that for $L$ large enough $\bbP_\u(\cB_{x,t})<\e$ for any $(x,t)\in\bbZ^{d+1}$. Moreover, $\cB_{x,t}$ only depends on the up-families $\cU_{y,s}$ for $(y,s)\in B_{x',t'}$ with $(x',t')$ at distance at most $r$ from $(x,t)$.

Fix an attractive absorbing PCA $\n$ sufficiently close to $\u$ (depending on $\e$ and $L$). Then we can couple the corresponding up-family fields $\cU_{y,s}$ and $\cU'_{y,s}$, so that they differ with small probability and independently for each $(y,s)\in\bbZ^{d+1}$. We say that the box $B_{x,t}$ is \emph{bad} if $\cB_{x,t}$ occurs or there exists a site $y,s$ at distance at most $rL$ from $B_{x,t}$ such that $\cU_{y,s}\neq\cU'_{y,s}$. By the well-known Liggett--Schonmann--Stacey theorem \cite{Liggett97} the good boxes stochastically dominate a product measure with density of bad boxes approaching $1$ if we choose $\e$ small enough.

Finally, we consider oriented percolation of bad boxes with range $r$ and observe that if a box does not belong to a bad connected component reaching time $0$, then the state at its top boundary is necessarily $\bzero$ in both the $\u$ and the $\n$-PCA with initial condition $\bone$. Indeed, if the box $B_{x,t}$ itself is not bad that is enough by definition of bad, boxes, while otherwise we can proceed by induction. Namely, we observe that if the state of all boxes $B_{y,t-1}$ for $\|x-y\|_\infty\le r$ are good, then the their top boundaries are in state $\bzero$ and no absorbing PCA of range $r$ can reach a nonzero state at the top of $B_{x,t}$, starting from that.

We can then conclude that \cref{eq:main:1} holds for $\n$ (and different $c,C>0$), since in highly subcritical independent percolation of range $r$ the size of clusters has this exponential decay property (e.g.\ by \cref{cor:GOSP}, which was a consequence of \cref{th:linear} without going through \cref{th:parametrised}).
\end{proof}

Our next goal is to prove \cref{th:parametrised}. We will deduce this from the specific case, \cref{th:linear}, and the following result.
\begin{lem}
\label{lem:strong:monotonicity}
Let $(\u_p)_{p\in[0,1]}$ be a strongly increasing parametrised model. Then there exists $c'>0$ such that for all $p\in(0,1]$ there exists $\e>0$ such that for all $p'\in [1-\e,1]$ it holds that $\n_{p'}:=p'\u_{p}+(1-p')\d_{\varnothing}$ stochastically dominates $\u_{p-(1-p')/c'}$.
\end{lem}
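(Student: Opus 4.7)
The plan is to verify the criterion in the paper's definition of stochastic domination directly. Pick any $c'\in(0,c)$, say $c':=c/2$, where $c>0$ is the constant from the strong monotonicity of $(\u_p)$, and set $\delta:=(1-p')/c'$. By definition, $\n_{p'}$ stochastically dominates $\u_{p-\delta}$ iff $\n_{p'}(\sD)\le \u_{p-\delta}(\sD)$ for every $\sD\in\fD$. The cases $\sD=\varnothing$ and $\sD=\sU$ are immediate (both measures agree and equal $0$ and $1$ respectively). For any $\sD\in\fD\setminus\{\varnothing,\sU\}$, the fact that $\sD$ is a non-empty down-set of $(\sU,\supset)$ forces $\varnothing\in\sD$, since $\varnothing\subset\cU$ for every $\cU\in\sU$ and $\sD$ is closed under taking subsets. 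Hence $\d_\varnothing(\sD)=1$ and
\[
\n_{p'}(\sD)=p'\u_p(\sD)+(1-p'),
\]
so the required inequality reduces to
\[
(1-p')\bigl(1-\u_p(\sD)\bigr)\le \u_{p-\delta}(\sD)-\u_p(\sD).
\]

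The main step is to exploit strong monotonicity. Setting $f(q):=\u_q(\sD)$, the hypothesis $f'(q)\le -c(1-f(q))$ is equivalent to saying that $q\mapsto e^{-cq}(1-f(q))$ is non-decreasing on $[0,1]$. Integrating between $p-\delta$ and $p$ yields
\[
1-f(p-\delta)\le e^{-c\delta}\bigl(1-f(p)\bigr),
\]
and hence $\u_{p-\delta}(\sD)-\u_p(\sD)\ge \bigl(1-f(p)\bigr)\bigl(1-e^{-c\delta}\bigr)$. It therefore suffices to arrange $1-p'\le 1-e^{-c\delta}$, which with $c'=c/2$ reads $e^{-2(1-p')}\le p'$. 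Writing $x:=1-p'$, this becomes $e^{2x}(1-x)\ge 1$; a Taylor expansion gives $e^{2x}(1-x)=1+x+O(x^3)$, so the inequality holds for every $x$ below some explicit threshold $x_0>0$.

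We also need $p-\delta\in[0,1]$, i.e., $1-p'\le c'p$. Setting $\e:=\min(x_0,c'p)>0$ then satisfies all the constraints simultaneously, proving the lemma. I do not foresee any serious obstacle: the argument is essentially a one-parameter differential inequality combined with the combinatorial observation that every non-trivial $\sD\in\fD$ automatically contains the minimal element $\varnothing$. This is precisely what ensures that the mass $(1-p')$ contributed by the $\d_\varnothing$ component appears on the left-hand side and can be absorbed by the strict Gr\"onwall gain on the right-hand side available whenever $c'<c$.
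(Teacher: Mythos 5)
Your proof is correct, and it shares the two key ingredients with the paper's: the observation that every nontrivial $\sD\in\fD$ contains $\varnothing$ (hence $\d_\varnothing(\sD)=1$), and the choice $c'=c/2$. The route is nevertheless different. The paper argues \emph{locally}: the two sides of the desired inequality agree at $p'=1$, and it compares their $p'$-derivatives near $p'=1$; to show the derivative inequality it lets $p'\to 1$, which requires separating the case $\u_p(\sD)=1$ and invoking the finiteness of $\fD$ to extract a uniform neighbourhood. Your argument instead \emph{integrates} the strong-monotonicity differential inequality (Gr\"onwall), producing the explicit exponential bound $1-\u_{p-\delta}(\sD)\le e^{-c\delta}\bigl(1-\u_p(\sD)\bigr)$, after which what remains is the universal numerical condition $e^{2x}(1-x)\ge 1$ for $x=1-p'$ small. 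This handles $\u_p(\sD)=1$ with no case split and yields a uniform threshold without appealing to the finiteness of $\fD$; the only $p$-dependence of $\e$ is through the domain constraint $p-\delta\ge 0$, i.e.\ $\e\le c'p$, which you correctly impose. So the proposal is a cleaner, explicit version of the same comparison, trading the paper's local limit argument for a global Gr\"onwall estimate.
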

\begin{proof}
Notice that for any down-system $\sD\in\fD\setminus\{\varnothing,\sU\}$ the desired inequality $\n_{p'}(\sD)\le \u_{p-(1-p')/c'}(\sD)$ is an equality for $p'=1$. Therefore, it suffices to show that the derivatives satisfy the inverse inequality:
\begin{equation}
\label{eq:ineq:domination}
\u_p(\sD)-1
=\n'_{p'}(\sD)\ge \frac{\partial\u_{p-(1-p')/c'}}{\partial p'}(\sD)=\frac{\u'_{p-(1-p')/c'}(\sD)}{c'}\end{equation}
in the neighbourhood of $p'=1$.

If $\u_p(\sD)=1$, we may conclude directly by the fact that the parametrised model is nondecreasing. Otherwise, recalling \cref{eq:def:strong:monotonicity} and setting $c'=c/2$, we have
\[\frac{\u'_{p-(1-p')/c'}(\sD)}{c'}\le  2\left(\u_{p-(1-p')/c'}(\sD)-1\right)\xrightarrow{p'\to1}2\left(\u_p(\sD)-1\right)<\u_p(\sD)-1.\]
Hence, for $p'$ sufficiently close to $1$, \cref{eq:ineq:domination} does hold for all $\sD\in\fD\setminus\{\varnothing,\sU\}$, since $\fD$ is finite.
\end{proof}
\begin{proof}[Proof of \cref{th:parametrised}]
Consider a strongly increasing parametrised model $(\u_p)_{p\in[p_1,p_2]}$. Up to linear reparametrisation, we may assume that $p_1=0,p_2=1$. We further suppose that $\pc>0$, as otherwise there is nothing to prove. Fix $p\in(0,\pc)$ and set $\n_{p'}=p'\u_{p}+(1-p')\d_{\varnothing}$ for $p'\in [1-\e,1]$, where $\e$ is from \cref{lem:strong:monotonicity}. Let $\pc'$ denote the critical value of this parametrised model. Then \cref{th:linear} shows that \cref{eq:main:1,eq:main:2} hold for $\n_{p'}$ for $p'<\pc'$. By \cref{lem:strong:monotonicity} and the fact that $(\u_p)_{p\in[0,1]}$ is nondecreasing, it suffices to prove that $\pc'=1$. But this is clear, because the definition of $\pc$ gives that $\d_\bzero$ is the unique invariant measure for $\u_p=\n_1$.
\end{proof}

\section{Bootstrap percolation and cellular automata}
\label{sec:BP}
\subsection{The correspondence}
It was noticed already by Schonmann \cite{Schonmann92} that standard oriented site percolation, instead of a one-dimensional PCA or a two-dimensional percolation, can be viewed as a two-dimensional BP with an i.i.d.\ initial condition. This was exploited in \cite{Hartarsky21} and extended to GOSP subsequently studied in \cite{Hartarsky21GOSP}. We now show that this correspondence in fact extends to all attractive PCA.\footnote{We refer the reader to \cite{Lebowitz90} for a related correspondence between PCA and equilibrium statistical mechanics models.}
\begin{prop}[BP--CA correspondence]
\label{prop:correspondence}
Consider an attractive CA with $\u=\d_{\cU}$ for some $\cU\in\sU$. Let $\th$ be its version with death with rates measure $\tu=p\u+(1-p)\d_\varnothing$ for some $p\in[0,1]$. Let $\cU_{x,t}$ be the up-family field used to construct the $\tu$-PCA. Define the $d+1$-dimensional up-family 
\begin{equation}
\label{eq:correspondence}
\cX=\left\{R\setminus D:D\in\O_R\setminus\cU\right\}.
\end{equation}
Consider the BP process $\o$ defined by the update family $\cX$ with initial condition $(\1_{\cU_{x,t}=\varnothing})_{(x,t)\in\bbZ^{d+1}}$. Define its \emph{closure} 
\begin{equation}
\label{eq:def:closure}
C=\left\{(x,t)\in\bbZ^{d+1}:\exists s\in\{0,1,\dots\},\o_{(x,t)}(s)=1\right\}
\end{equation}
and the configuration $C_0=(\bbZ^d\times\{-r,\dots,-1\})\setminus C$. Then for all $t\ge 0$
\begin{equation}
\label{eq:etatilde}
\th^{C_0}(t)=\{x\in\bbZ^d:(x,t)\not\in C\}
\end{equation}
and this is a version of the stationary $\tu$-PCA at its upper invariant measure, that is, the limit as $t\to\infty$ of the law of $\widetilde\h^{\bone}(t)$. In particular, $C=\bbZ^{d+1}$ a.s.\ if and only if $\d_\bzero$ is the unique invariant measure of the $\tu$-PCA.

Moreover, the map $\cU\leftrightarrow\cX$ is a one-to-one correspondence between $d$-dimensional attractive CA and $d+1$-dimensional BP with update family contained in the lower half-space $\bbZ^{d}\times\{-1,-2,\dots\}$.
\end{prop}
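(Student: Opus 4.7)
The heart of the proposition is the identity \eqref{eq:etatilde}; the stationarity claim, the ``in particular'' clause, and the bijection will all follow quickly from it. The crucial structural observation is that $\cX\subset\O_R$ lives in the lower half-space $\bbZ^d\times\{-r,\dots,-1\}$, so the BP dynamics on $\bbZ^{d+1}$ are \emph{causal} in the time coordinate: the status of $(x,t)\in C$ depends only on $(\cU_{x',t'})_{t'<t}$ together with $\1_{\cU_{x,t}=\varnothing}$. This naturally suggests induction on $t$, the base case $t\in\{-r,\dots,-1\}$ being simply the definition of $C_0$.

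For the inductive step, fix $(x,t)$ with $t\ge 0$ and set $U=\{(y,s)\in R:\th^{C_0}_{x+y}(t+s)=1\}$; by the induction hypothesis the set of already-infected neighbours of $(x,t)$ inside its shifted copy of $R$ is exactly $R\setminus U$. By monotonicity of BP, $(x,t)$ is eventually infected iff some $X=R\setminus D\in\cX$ with $D\in\O_R\setminus\cU$ satisfies $X\subset R\setminus U$, equivalently $U\subset D$ for some $D\notin\cU$. Since $\O_R\setminus\cU$ is a down-set, this is in turn equivalent to $U\notin\cU$. Combining this with the initial BP infection, $(x,t)\in C$ iff $\cU_{x,t}=\varnothing$ or ($\cU_{x,t}=\cU$ and $U\notin\cU$), which by \eqref{eq:def:PCA} is exactly the failure of $\th^{C_0}_x(t)=1$. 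This closes the induction and establishes \eqref{eq:etatilde}.

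Stationarity of $\th^{C_0}(t)$ in $t$ is immediate from time-translation invariance of the law of $(\cU_{x,t})_{(x,t)\in\bbZ^{d+1}}$, and hence of $C$. To identify the law with the upper invariant measure, I extend $\th^{C_0}$ to all of $\bbZ^{d+1}$ by $\th^{C_0}_x(t):=\1_{(x,t)\notin C}$ and show it is the graphically maximal trajectory satisfying \eqref{eq:def:PCA} globally: for any such $\xi$, the set $\{(x,t):\xi_x(t)=0\}$ contains the initial BP infection $\{\cU_{x,t}=\varnothing\}$, and a short up-set argument (dual to the one above) shows it is BP-closed, so it must contain $C$, giving $\xi\le\th^{C_0}$. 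Applying this to $\xi^T:=\th^{\bone}$ started at time $-T$, attractiveness gives $\th^{C_0}\le\xi^T$ while maximality gives $\lim_{T\to\infty}\xi^T\le\th^{C_0}$ in the graphical coupling, so $\xi^T(0)\to\th^{C_0}(0)$ both pointwise and in law; since the law of $\xi^T(0)$ tends to the upper invariant measure by definition, the identification is complete. The ``in particular'' clause is then routine: for an absorbing PCA, $\d_\bzero$ is the lower invariant measure, so uniqueness of the invariant measure coincides with the upper invariant being $\d_\bzero$, i.e.\ $\th^{C_0}\equiv\bzero$ a.s., i.e.\ $C=\bbZ^{d+1}$ a.s.

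The bijection statement is purely set-theoretic: $\cU\mapsto\O_R\setminus\cU$ exchanges up-sets and down-sets of $\O_R$, and $D\mapsto R\setminus D$ reverses inclusion on $\O_R$, carrying down-sets to up-sets. Their composition $\cU\mapsto\cX$ is thus an inclusion-preserving bijection between $d$-dimensional attractive CA (encoded by up-sets of $\O_R$) and $d+1$-dimensional BP update families contained in $R\subset\bbZ^d\times\{-1,-2,\dots\}$, after the harmless convention of identifying such families with their antichains of minimal elements. The only genuine difficulty in the whole argument is notational: one must keep straight three parallel inclusion orderings---within $\cU$, within $\cX$, and within the infected set---and apply the up-set property of $\cU$ (equivalently, the down-set property of $\O_R\setminus\cU$) at exactly the right moment to convert the BP existence statement into a direct membership condition.
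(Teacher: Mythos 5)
Your argument is correct and follows essentially the same route as the paper. The induction on the $(d{+}1)$-dimensional time coordinate for \eqref{eq:etatilde}, hinging on the down-set property of $\O_R\setminus\cU$ to convert ``$\exists D\notin\cU$ with $U\subset D$'' into ``$U\notin\cU$'', is exactly the paper's mechanism, as is the involutive double-complement argument for the bijection. The one place where your presentation diverges is the identification of the upper invariant measure: the paper proves $1-\o_{(x,s)}(t)\ge\th^\bone_x(s)$ for $s\ge rt$ by truncating the BP initial condition to $\bbZ^d\times\{0,\dots,s\}$ and reapplying \eqref{eq:etatilde} to the truncated closure $C'$, then passes to the limit; you instead extend $\th^{C_0}$ to a global trajectory, establish that it is the graphical maximum among all global trajectories, and squeeze $\xi^T=\th^\bone$ started from time $-T$ between $\th^{C_0}$ (via attractiveness at the $\xi^T$ initial slice, where the extension is $\le\bone$) and $\th^{C_0}$ again in the $T\to\infty$ limit (via maximality). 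The two arguments are morally identical---both are comparisons of $\th^\bone$ started ever earlier with the closure process---but your maximality phrasing avoids the truncated process $C'$ and is arguably slightly cleaner. No gaps.
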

\begin{proof}
The bijectiveness follows from the fact that the \emph{double complement} map $\cU\mapsto\{R\setminus D:D\in\O_R\setminus\cU\}$ is an involution of the power set of $\O_R$, since the two complements commute and each of them is an involution.

Fix an attractive CA and its corresponding BP as in the statement. Let us first verify \cref{eq:etatilde}. By induction on $t$, it suffices to verify this for $t=0$.

By definition $(x,0)\in C$ iff $\cU_{x,0}=\varnothing$ (this is the initial condition of $\o$) or there exists a minimal $s>0$ such that $\o_{(x,0)}(s)=1$. The latter, happens iff $R\setminus (\o(s-1)-(x,0))\not\in\cU$ and $\o_{(x,0)}(s-1)=0$. Hence, $(x,0)\in C$ iff $\cU_{x,0}=\varnothing$ or $R\cap (C_0-(x,0))\not\in\cU$. But this is equivalent to $\th_x^{C_0}(0)=0$ by definition, so \cref{eq:etatilde} indeed holds.

Thus, $1-\1_C$ is a trajectory of the $\tu$-PCA. Moreover, its law is clearly invariant by translation in $\bbZ^{d+1}$, so the process $\th^C_0$ is stationary. It remains to verify that this corresponds to the upper invariant measure. To see this it suffices to prove that 
\begin{equation}
\label{eq:omegaeta}
1-\o_{(x,s)}(t)\ge \th^\bone_x(s)\end{equation}
for all $(x,t,s)\in\bbZ^{d}\times\{1,2,\dots\}^2$ such that $s\ge rt$, since $1-\1_C$ is the decreasing limit of $1-\o(t)$. Notice that $1-\o_{(x,s)}(t)$ in fact only depends on the initial condition of $\o$ for $(y,u)\in\bbZ^d\times\{0,\dots,s\}$, since $s\ge rt$. Define the closure $C'$ as in \cref{eq:def:closure} with the process $\o'$ defined like $\o$ but with initial condition $\1_{\cU_{y,s}=\varnothing}$ for $(y,u)\in\bbZ^d\times\{0,\dots,s\}$ and 0 elsewhere. Then in fact $1-\o_{(x,s)}(t)= 1-\o'_{(x,s)}(t)\ge 1-\1_{(x,s)\in C'}=\th^\bone_x(s)$. The last equality is \cref{eq:etatilde} applied to a suitable choice of initial condition (which we may choose freely, since the relation is deterministic and not just a.s.). Thus, \cref{eq:omegaeta} is established and the proof is complete.
\end{proof}

Amusingly, since BP is an attractive CA, one can iterate this correspondence. As an example, consider the $0$-dimensional CA with no memory given by the identity map $0\mapsto 0;1\mapsto1$ (there are four 0-dimensional CA without memory---the identity, the constant $0$, the constant $1$ and a non-attractive one). It is clearly not an eroder, since $0$ does not become $1$. Its corresponding 1-dimensional BP model is East-BP, which makes the right neighbour of a 1 also 1. When applying the correspondence a second time, we obtain the North-East-BP model (up to a linear transformation of the lattice), which is equivalent to standard oriented site percolation.

\begin{prop}
\label{prop:inhomogeneous}
The correspondence of \cref{prop:correspondence} extends to a one-to-one mapping from the set of all $d$-dimensional attractive PCA (death may be integrated directly into $\u$) to $d+1$-dimensional inhomogeneous BP with $\bzero$ initial condition and update families measure $\chi$ supported on the set of update families contained in the lower half-space (deaths corresponding to $\varnothing\in\cX_x$). Namely, $\chi$ is the image of $\u$ via the mapping $\cU\mapsto\cX$ of \cref{eq:correspondence} and vice versa. Then the same conclusions still hold (\cref{eq:etatilde} and it being a stationary $\u$-PCA at its upper invariant measure). 
\end{prop}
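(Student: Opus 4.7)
The plan is to lift the single-CA correspondence of \cref{prop:correspondence} to the probabilistic setting by encoding the randomness of the PCA directly into a random update family field, rather than into the initial condition of the BP.

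I would first extend the set-level bijectivity already used in \cref{prop:correspondence} to the level of measures. The double complement map $\cU\mapsto\cX$ from \cref{eq:correspondence} is an involution on the power set of $\O_R$ that restricts to a bijection between up-families in $\sU$ and update families contained in $R$, equivalently in the lower half-space. Pushing forward this bijection yields a bijection between finitely supported probability measures $\u$ on $\sU$ and finitely supported measures $\chi$ on update families of the latter type. Moreover, $\varnothing\in\cX$ iff $R\notin\cU$ iff $\cU=\varnothing$ (an up-family either contains the maximum $R$ or is empty), so $\u(\{\varnothing\})=\chi(\{\cX:\varnothing\in\cX\})$, confirming that death in the PCA is indeed represented by $\varnothing\in\cX_x$ on the BP side.

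Next I would couple the two processes. Given the i.i.d.\ field $(\cU_{x,t})_{(x,t)\in\bbZ^{d+1}}$ with law $\u$ used to construct the $\u$-PCA, set $\cX_{x,t}$ to be the image of $\cU_{x,t}$ under the double complement; these $\cX_{x,t}$ are then i.i.d.\ with law $\chi$ and drive the inhomogeneous BP. With this coupling in hand, I would verify \cref{eq:etatilde} by induction on $t$ following the proof of \cref{prop:correspondence} verbatim. The key point is that the local update rule at a space-time site depends only on $\cU_{x,t}$ on the PCA side and only on $\cX_{x,t}$ on the BP side, and the two are in bijection, so the chain of equivalences ``$(x,0)\in C$ iff $\cU_{x,0}=\varnothing$ or $R\cap(C_0-(x,0))\notin\cU_{x,0}$ iff $\th^{C_0}_x(0)=0$'' transports unchanged, with the previously-fixed $\cU$ and $\cX$ replaced by their site-dependent versions $\cU_{x,0}$ and $\cX_{x,0}$.

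The only substantive change with respect to \cref{prop:correspondence} is the initial condition: here it is $\bzero$ throughout, rather than $\1_{\cU_{x,t}=\varnothing}$. This is harmless, because in a single BP step the $\bzero$-initialised process already reaches $\1_{\cU_{x,t}=\varnothing}$ (using $\varnothing\in\cX_{x,t}$ iff $\cU_{x,t}=\varnothing$), after which monotonicity of BP forces both processes to coincide and to have the same closure $C$. Stationarity of $1-\1_C$ and its identification with the upper invariant measure of the $\u$-PCA then follow from the same finite-time truncation argument as in \cref{prop:correspondence}, \cref{eq:omegaeta} being re-derived by applying the already-proven \cref{eq:etatilde} to a suitably truncated initial condition. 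The main difficulty is not conceptual but bookkeeping: one has to track the site-dependent bijection carefully and handle the one-step shift induced by switching the initial condition, but beyond that the argument is a routine adaptation of the single-CA case.
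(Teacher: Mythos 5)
Your proposal is correct and follows essentially the same route as the paper, which omits the proof precisely because it is the proof of \cref{prop:correspondence} adapted verbatim: push the double-complement bijection forward to measures, couple $\cX_{x,t}$ to $\cU_{x,t}$, and rerun the induction for \cref{eq:etatilde} with site-dependent up-families. Your additional observation that the $\bzero$ initial condition reaches $\1_{\cU_{x,t}=\varnothing}$ after one step (so the closure is unchanged) is exactly the small bookkeeping point needed to reconcile the change of initial data between the two propositions, and it is handled correctly via the equivalence $\varnothing\in\cX_{x,t}\Leftrightarrow\cU_{x,t}=\varnothing$.
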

The proof is identical to the one of \cref{prop:correspondence} and therefore omitted.

\subsection{Proof of Theorems \ref{th:BP} and \ref{th:inhomogeneous}}
Fix a BP $\o$ with update family $\cX$ contained in a half-space. Up to an invertible linear transformation of the lattice, we may assume that this is the lower half-space and denote by $\u=\d_{\cU}$ the corresponding attractive CA via \cref{prop:correspondence}. The proposition gives us that $\qc$ in \cref{eq:def:qc} for $\cX$ is in fact equal to $1-\pc$ with $\pc$ from \cref{eq:def:pc} for the $\tu$-PCA with death rate $1-p$. \Cref{th:linear} applies to the $\tu$-PCA, so \cref{eq:main:1} holds for $p<\pc$. 

Applying \cref{eq:etatilde} to the initial condition equal to $\1_{\cU_{x,t}=\varnothing}$ for $t\ge 0$ and $0$ otherwise and denoting $C'$ the corresponding closure, we get that $\th_x^\bone(t)=1$ iff $(x,t)\not\in C'$. Hence, for $p<\pc$ there are $c,C>0$ such that for all $(x,t)\in\bbZ^{d+1}$
\[\bbP_{\tu}\left((x,t)\not\in C'\right)=\bbP_{\tu}\left(\th^\bone_x(t)\neq 0\right)\leq Ce^{-ct}.\]
Denote by $\o'$ the $\cX$-BP with the above initial condition. Then, $(x,t)\not\in C'$ iff $\o'_{(x,t)}(t)=0$, since, by induction on $t$, the $\o'$ process becomes stationary in $\bbZ^d\times\{0,\dots,t\}$ after $t$ steps. Finally, since $\o'_{(x,t)}(t)\le\o_{(x,t)}(t)$ by attractiveness, the proof of \cref{th:BP} is concluded. \Cref{th:inhomogeneous} is proved identically in view of \cref{prop:inhomogeneous}.

\subsection{Proof of Theorems \ref{th:Toom} and \ref{th:orientation:BP}}
Fix an attractive CA $\u=\d_\cU$ and let $\cX$ be its corresponding BP update family via \cref{prop:correspondence}. Consider a configuration $\x$ with finitely many $0$s. Applying \cref{eq:etatilde} to this initial condition and its closure $C$, we get that if $\cU$ is not an eroder, then $\cX$ is supercritical. Inversely, if $\cX$ is supercritical, considering the intersection of the closure of a finite initial set with infinite offspring and a horizontal strip of width $r$ (which is clearly finite), we similarly obtain from \cref{eq:etatilde} that $\cU$ is not an eroder.

Furthermore, \cref{prop:correspondence} grants that $\cX$ is subcritical iff $\cU$ has at least two invariant measures when the death rate $1-p$ is small enough. Thus, \cref{th:Toom} is equivalent to \cref{th:orientation:BP} restricted to two-dimensions and, inversely \cref{th:Toom} without restrictions on the dimension is equivalent to \cref{th:orientation:BP}.

Hence, \cref{th:orientation:BP} follows from \cref{th:Toom}, which is valid in all dimensions \cite{Toom80}*{Theorem 5}. Turning to \cref{th:Toom} in one dimension, in order to avoid a circular reasoning, we recall from \cref{subsec:BP} that by \cites{Bollobas15,Balister16} it suffices to verify the following fact, which we prove for completeness (see \cite{Toom80}*{Theorem 6} or \cite{Toom95}*{Theorem 3}).
\begin{lem}
\label{lem:directions}
Fix a $d$-dimensional update family $\cX$ contained in a half-space. Then for any $u\in S^{d-1}$ the set of stable directions $v\in S^{d-1}$ such that $\<u,v\><0\}$ is either empty or has a nonempty interior. Moreover, if $d=2$ and there is no open semicircle of unstable directions, then there exist two opposite directions in the interior of the set of stable directions.
\end{lem}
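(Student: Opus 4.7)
Let $u_0\in S^{d-1}$ be the unit vector supplied (after normalisation) by the half-space hypothesis, so that $\<u_0,x\><0$ for every $x$ in every $X\in\cX$. Discarding the trivial case where some $X\in\cX$ is empty (then every direction is unstable and both conclusions are vacuous), my plan is to use $u_0$ both to produce strictly stable perturbations of any stable direction (first claim) and to analyse the planar geometry of the unstable set (second claim).

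For the first claim, I would observe that the cone of strictly stable vectors
\[V=\{v\in\bbR^d:\forall X\in\cX,\exists x\in X,\<v,x\>>0\}\]
is open in $\bbR^d$, by continuity and finiteness of $\cX$. Given a stable $v\in S^{d-1}$ with $\<u,v\><0$, for each $X\in\cX$ pick some $x^{*}\in X$ with $\<v,x^{*}\>\ge 0$; then
\[\<v-\e u_0,x^{*}\>=\<v,x^{*}\>-\e\<u_0,x^{*}\>>0\qquad\text{for every }\e>0,\]
so $v-\e u_0\in V$. Choosing $\e$ small enough to preserve $\<u,v-\e u_0\><0$ and renormalising produces a strictly stable direction in the open hemisphere $\{w\in S^{d-1}:\<u,w\><0\}$; openness of both $V$ and the hemisphere then furnishes an $S^{d-1}$-neighbourhood of that direction consisting entirely of stable directions in the hemisphere, as required.

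For the planar claim, I would analyse the unstable set arc by arc. For each $X\in\cX$, the set $U_X=\{v\in S^1:\forall x\in X,\<v,x\><0\}$ is the intersection of $S^1$ with an open convex cone in $\bbR^2$ (a finite intersection of open half-planes, each containing $u_0$), hence a nonempty open arc containing $u_0$. Consequently $U=\bigcup_{X\in\cX}U_X$ is open in $S^1$ and connected (all its pieces share $u_0$), so $U$ is either $S^1$ itself or a proper open arc; the hypothesis that no open semicircle consists entirely of unstable directions forces $|U|<\pi$, since any open arc of length at least $\pi$ contains an open semicircle. Therefore $S^1\setminus\overline U$, which coincides with the interior of the stable set, is an open arc of length strictly greater than $\pi$ and consequently contains a pair of antipodal points. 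Nothing here is really delicate; the crux is simply that the half-space hypothesis is what both makes the perturbation $-\e u_0$ land in $V$ and guarantees every $U_X$ shares the common point $u_0$, so I anticipate no substantial obstacle.
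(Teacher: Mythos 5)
Your proof is correct and follows essentially the same strategy as the paper: the paper writes the stable set as a finite intersection of finite unions of closed hemispheres whose interiors all contain $e_d=-u_0$ and slides along geodesics toward $e_d$, while you dually describe the unstable set as a union of open arcs containing $u_0$ and perturb linearly by $-\e u_0$ — the same idea in complementary language. The planar arc-length bookkeeping likewise matches the paper's, only performed on the unstable arc $U$ rather than on the stable arc $S$.
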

\begin{proof}
Let $e_1,\dots, e_{d}$ denote the canonical basis of $\bbR^{d+1}$. For concreteness let us assume that the family is contained in the upper half-space $\{x\in\bbR^{d}:\<x,e_{d}\>>0\}$. It is not hard to see from the definition (see \cite{Bollobas15}*{Remark 3.3}) that the set of stable directions can be written as a finite intersection of finite unions of closed hemispheres containing the direction $e_{d}$ in their interior. 

Clearly, any closed hemisphere contains the geodesic between any of its points and any point in its interior. Therefore, for any stable direction the geodesics to a neighbourhood of $e_d$ consist of stable directions. Hence, the set of stable directions is the closure of its own interior. The general conclusion then follows immediately.

In two dimensions, matters are simpler. We already established that stable directions form a connected closed set, that is, a closed interval of $S^1$. Depending on whether it is smaller or larger than a semicircle, this gives the desired conclusion.
\end{proof}

To conclude, let us mention that, given opposite stable directions provided in \cref{lem:directions}, the renormalisation argument sketched in \cite{Balister16} proceeds as follows. Divide the plane into large rhombi in the usual way, so that their sides are close to being perpendicular to these directions. Then perform renormalisation, saying that the rhomubs is good if it is initially in state $\bzero$ for the $\cX$-BP, which happens with high probability if the parameter $q$ is small. Then it suffices to verify that an infinite oriented path of good rhombi will remain in state $0$ forever, which follows from the suitable choice of their geometry.

\section{Further directions}
To conclude, let us mention a few directions for further work. 

Firstly, in view of \cref{th:BG,th:main}, one would naturally like to know what happens in the cooperative survival phase. That is the interior of the set of absorbing attractive PCA $\u$ such that $\bbP_\u(\forall t>0,\h^{\{0\}}(t)\neq\bzero)=0$ and with more than one invariant measure. We are aware of no results in this direction. For instance one may expect the following to be true.
\begin{ques}
For an attractive PCA $\u$ in the cooperative survival phase does one have exponential convergence to the upper invariant measure starting from the $\bone$ initial condition? Equivalently, in the corresponding inhomogeneous BP, does the truncated infection time have an exponential tail, that is, setting $\t_0=\inf\{t> 0:\h_0(t)=1\}$, do we have \[\limsup_{t\to\infty}\frac{\log \bbP(\t_0>t|\t_0<\infty)}{t}<0?\]
\end{ques}
For models, such as the Toom rule with death, for which one can prove that they are in the cooperative survival phase, this follows from the corresponding proof, but we rather ask for non-perturbative results valid throughout the phase.

It would also be very interesting to obtain an analogue of \cref{th:main} for PCA with a unique invariant measure not necessarily equal to $\d_\bzero$. See \cites{Taati21,Louis04} for progress in this direction under other conditions.

Furthermore, it is natural to seek to extend \cref{th:main,th:linear} to continuous time absorbing attractive interacting particle systems with single spin flips. In the case of the contact process this is a well-known result of Bezuidenhout and Grimmett \cite{Bezuidenhout91} (also see \cite{Swart18}).

Finally, in the light of the BP--PCA correspondence of \cref{prop:correspondence,prop:inhomogeneous}, can one transfer more interesting information between the two settings?

\section*{Acknowledgements}
This work is supported by ERC Starting Grant 680275 ``MALIG.'' We thank Ir\`ene Marcovici, Jan Swart, R\'eka Szab\'o, Siamak Taati and Cristina Toninelli for enlightening discussions. We also thank Réka for bringing important references to our attention. We thank the annonymous referees for helpful remarks on the presentation.

\paragraph{Subsequent developments}
Since the submission of this manuscript several particularly related works have been completed and call for comment.

Firstly, as expected, BP universality was extended to higher dimensions by Balister, Bollob\'as, Morris and Smith. In particular, \cite{Balister22} established that update families such that every open hemisphere contains an open set of stable directions is subcritical. One may recover \cref{th:Toom} in any dimension (this result in any dimension is exactly the content of \cite{Toom80}) from \cite{Balister22}*{Corollary 1.6} in the same way that we deduced \cref{th:Toom} from \cite{Balister16}*{Theorem 9}. However, the multiscale renormalisation of \cite{Balister22} is arguably more complex than the Peierls argument of \cite{Toom80} (see also \cite{Swart22}), making this alternative proof less appealing. Similarly, when restricted to families $\cX$ contained in a half-space, \cite{Balister22} gives an alternative proof of the most difficult part of \cref{th:orientation:BP}.

Secondly, based on a recent generalisation of Toom's approach due to Swart, Szab\'o and Toninelli \cite{Swart22}, Szab\'o and the author \cite{Hartarsky22Toom} gave an alternative proof of the main result of \cite{Balister22} cited above not necessarily restricted to families contained in a half-space, unlike \cref{th:orientation:BP}. To that end they employed a connection between PCA and BP complementary to the one of \cref{prop:correspondence}. Moreover, using both connections simultaneously, they established improved quantitative bounds in Toom's setting of CA with death. 

\let\d\oldd
\let\k\oldk
\let\l\oldl
\let\L\oldL
\let\o\oldo
\let\O\oldO
\let\r\oldr
\let\t\oldt
\let\u\oldu

\bibliographystyle{plain}
\bibliography{Bib}
\end{document}